% The following LaTeX file is an example of an acceptable paper for the 
% Communications in Statistics.

\documentclass[11pt]{article} % Include statements
\usepackage[left=1in,top=1in,right=1in,bottom=1in,nohead]{geometry}

\usepackage{amssymb,amsfonts,amsmath,amsthm}
\usepackage[numbers,square]{natbib}
\usepackage{color}
\usepackage{hyperref}
\hypersetup{backref,colorlinks=true,citecolor=blue,linkcolor=blue,urlcolor=blue}

% Theorem environments
\usepackage{aliascnt}
\newtheorem{theorem}{Theorem}[section]

\newaliascnt{result}{theorem} 

\aliascntresetthe{result} 

\newaliascnt{lemma}{theorem} 
\newtheorem{lemma}[lemma]{Lemma}
\aliascntresetthe{lemma} 

\newaliascnt{prop}{theorem}
\newtheorem{proposition}[prop]{Proposition}
\aliascntresetthe{prop}

\newaliascnt{cor}{theorem} 

\aliascntresetthe{cor} 

\newaliascnt{conj}{theorem}

\aliascntresetthe{conj} 

\newaliascnt{def}{theorem}
\newtheorem{definition}[def]{Definition}
\aliascntresetthe{def}

\renewcommand{\hat}[1]{\widehat{#1}}
\newcommand{\email}[1]{\href{mailto:#1}{#1}}
\newcommand{\R}{\mathbb{R}}
\newcommand{\E}{\mathbb{E}}
\newcommand{\V}{\mathbb{V}}
\renewcommand{\P}{\mathbb{P}}
\newcommand{\X}{\mathbb{X}}
\newcommand{\ep}{\varepsilon}

\newcommand{\thetaLOOnoj}{\hat\theta^{(i)}(\lambda)}

\newcommand{\op}{o_{\mathbb{P}}}

\newcommand{\lamCV}{\widehat{\lambda}_n}
\DeclareMathOperator*{\argmin}{argmin} % thanks, wikipedia!

\DeclareMathOperator*{\tr}{\textrm{tr}}

\newcommand{\norm}[1]{\left|\left| #1 \right|\right|}
\newcommand{\ols}{\hat\theta(0)}

\newcommand{\lasso}{\hat\theta(\lambda)}
\newcommand{\lassoj}{\hat\theta_j(\lambda)}

\newcommand{\Cin}{C_{i,n}}
\newcommand{\cmin}{c_{\min}}

\begin{document}

\title{Leave-one-out cross-validation is risk consistent for lasso}
\author{Darren Homrighausen\\Department of Statistics\\Colorado
  State University\\Fort Collins,
  Colorado\\\email{darrenho@stat.colostate.edu} \and Daniel
  J. McDonald \\ Department of Statistics\\Indiana
  University\\Bloomington, IN 47408\\\email{dajmcdon@indiana.edu}}
\date{\today}

\maketitle

\begin{abstract}
  The lasso procedure is ubiquitous in the statistical and signal
  processing literature, and as such, is the target
  of substantial theoretical and applied
  research. While much of this research focuses on the desirable
  properties that lasso possesses---predictive risk consistency,
  sign consistency, correct model selection---all of it has assumes
  that the tuning parameter is chosen in an oracle fashion. Yet, this
  is impossible in practice. Instead, data analysts must use the
  data twice, once to choose the tuning parameter and again to
  estimate the model. But only heuristics have ever justified such
  a procedure. To this end, we give the first definitive answer
  about the risk consistency of lasso when the smoothing parameter is
  chosen via cross-validation. We show that under some restrictions on the
  design matrix, the lasso estimator is still risk consistent with an
  empirically chosen tuning parameter. \\
  \vspace{1em}

  \noindent{\bf Keywords:} stochastic equicontinuity, uniform convergence,
    persistence
\end{abstract}

\section{Introduction}

Since its introduction in the statistical~\citep{Tibshirani1996} and
signal processing~\citep{ChenDonoho1998} communities, the lasso has
become a fixture as both a data analysis tool \citep[for
example]{LeeZhu2010,ShiWahba2008} and as an object for deep
theoretical investigations
\citep{FuKnight2000,GreenshteinRitov2004,MeinshausenBuhlmann2006}.  To
fix ideas, suppose that the observational model is of the form
\begin{equation}
  Y = \X \theta + \sigma W.
  \label{eq:fullLinearModel}
\end{equation}
where $Y=(Y_1,\ldots,Y_n)^{\top}$ is the vector of responses 
and $\X \in \mathbb{R}^{n\times p}$ is the feature matrix, with rows
$(X_i^{\top})_{i=1}^n$, $W$ is a noise vector, and $\sigma$ is the
signal-to-noise ratio.
Under (\ref{eq:fullLinearModel}), the lasso estimator, 
$\widehat \theta(\lambda)$, is defined to be
the minimizer of the following functional:
\begin{equation}
  \widehat \theta(\lambda) := \argmin_{\theta} \frac{1}{2n} || Y-\X\theta ||_2^2 + \lambda || \theta ||_1.
  \label{eq:regularized}
\end{equation}
Here, $\lambda \geq 0$ is a tuning parameter controlling the trade-off
between fidelity to the data (small $\lambda$) and sparsity (large $\lambda$).
We tacitly assume that $\X$ has full column rank,
and thus, $\widehat \theta(\lambda)$
is the unique minimum.

Under conditions on the matrix $\X$, noise
vector $W$, and the parameter $\theta$, the optimal choice of
$\lambda$ leads to risk consistency \citep{GreenshteinRitov2004}.
However, arguably the most crucial aspect of any procedure's
performance is the selection of the tuning parameters.  Typically,
theory advocating the lasso's empirical properties specifies only the
rates. That is, this theory claims ``if $\lambda=\lambda_n$ goes to
zero at the correct rate, then $\widehat{\theta}(\lambda_n)$ will be
consistent in some sense.'' For the regularized problem
in~(\ref{eq:regularized}), taking $\lambda_n=o((\log(n)/n)^{1/2})$
gives risk consistency under very general conditions. However, this
type of theoretical guidance says nothing about the properties of the
lasso when the tuning parameter is chosen using the data.

There are several proposed techniques for choosing $\lambda$, such as
minimizing the empirical risk plus a penalty term based on the
 degrees of freedom \citep{ZouHastie2007,TibshiraniTaylor2012}
or using an adapted Bayesian information criterion \citep{WangLeng2007}.  
In many papers, \citep[for
example]{Tibshirani1996,GreenshteinRitov2004,HastieTibshirani2009,EfronHastie2004,ZouHastie2007,Tibshirani2011,GeerLederer2011}, the recommended technique for selecting
$\lambda$ is to choose $\lambda=\lamCV$ such that
$\lamCV$ minimizes a cross-validation estimator of the risk.

Some results supporting the use of cross-validation for statistical
algorithms other than lasso are known. For instance, kernel regression
\citep[Theorem 8.1]{Gyorfi2002}, $k$-nearest neighbors \citep[Theorem
8.2]{Gyorfi2002}, and various classification algorithms \citep{Schaffer1993} all
behave well with tuning parameters selected using the data.  Additionally, 
suppose we form the adaptive ridge regression estimator \citep{Grandvalet1998}
\begin{equation}
\argmin_{\theta,(\lambda_j)} \norm{Y - \X \theta}_2^2 + \sum_{j=1}^p \lambda_j \theta_j^2
\label{eq:adaptiveRidge}
\end{equation}
subject to the contraint $\lambda\sum_{j=1}^p  1/\lambda_j = p$.  Then the solution
to equation \eqref{eq:adaptiveRidge} is equivalent, under a reparameterization of $\lambda$,
to the solution to equation \eqref{eq:regularized}.  As ridge regression has been shown to have
good asymptotic properties under (generalized) cross-validation, there is reason to believe these properties
may carry over to lasso and cross-validation using this equivalence.
However, rigorous results for the lasso have yet to be developed.

The supporting theory for other methods indicates that there should
be corresponding theory for the lasso.  However, other results are not
so encouraging.  In particular, \citep{Shao1993} shows that
cross-validation is inconsistent for model selection.  As lasso
implicitly does model selection, and shares many connections with
forward stagewise regression \citep{EfronHastie2004}, this raises a
concerning possibility that lasso might similiarly be inconsistent
under cross-validation. Likewise, \citep{LengLin2006} shows that using
prediction accuracy (which is what cross-validation estimates) as a
criterion for choosing the tuning parameter fails to recover the
sparsity pattern consistently in an orthogonal design
setting. Furthermore, \citep{XuMannor2008} show that sparsity inducing
algorithms like lasso are not (uniformly) algorithmically stable. In
other words, leave-one-out versions of the lasso estimator are not
uniformly close to each other.  As shown in
\citep{BousquetElisseeff2002}, algorithmic stability is a sufficient,
but not necessary, condition for risk consistency.

These results taken as a whole leave the lasso in an unsatisfactory
position, with some theoretical results and generally accepted
practices advocating the use of cross-validation while others suggest
that it may not work.  Our result partially resolves this antagonism by
showing that, in some cases, the lasso with cross-validated tuning
parameter is indeed risk consistent.

In this paper we provide a first result about the risk consistency of
lasso with the tuning parameter selected by cross-validation under
some assumptions about $\X$.  In
Section~\ref{sec:notation-assumptions} we introduce our notation and
state our main theorem. In Section~\ref{sec:prelim-material} we state
some results necessary for our proof methods and in
Section~\ref{sec:proofs} we provide the proof. Lastly, in
Section~\ref{sec:discussion} we mention some implications of our main
theorem and some directions for future research.

\section{Notation, assumptions, and main results}
\label{sec:notation-assumptions}

The main assumptions we make for this paper ensure that the sequence $(X_i)_{i=1}^n$ is sufficiently regular.
These are

\textbf{Assumption A:}
\begin{equation}
C_n := \frac{1}{n} \sum_{i=1}^n X_i X_i^{\top} \rightarrow C,
\end{equation}
where $C$ is a positive definite matrix with  $\textrm{eigen}_{\min}(C) = \cmin > 0$,
and

\textbf{Assumption B:}
There exists a constant $C_X < \infty$ independent of $n$ such that
\begin{equation}
\norm{X_i}_2 \leq C_X.
\end{equation}

Note that Assumption A appears repeatedly in the literature  in 
various contexts \citep[for
example]{Tibshirani1996,FuKnight2000,OsbornePresnell2000,LengLin2006}.
Additionally, Assumption B is effectively equivalent to assuming $\max_i \{ ||X_i||_2, 1\leq i \leq n\} = O(1)$ as 
$n \rightarrow \infty$, which is also standard  \citep[for example]{ChatterjeeLahiri2011}.

We define the predictive risk and the leave-one-out cross-validation
estimator of risk to be
\begin{equation}
  R_n(\lambda) := \frac{1}{n}\mathbb{E} ||\X(\widehat\theta(\lambda) -
  \theta)||^2 + \sigma^2 
  =
  \mathbb{E} ||\widehat\theta(\lambda) -
  \theta)||_{C_n}^2 + \sigma^2 
  \label{eq:trueRisk}
\end{equation}
and  
\begin{equation}  
  \widehat R_n(\lambda) = \frac{1}{n} \sum_{i=1}^n (Y_i -
  X_i^{\top}\thetaLOOnoj)^2, 
    \label{eq:cvRisk}
\end{equation}
respectively.  Here we are using $\thetaLOOnoj$ to
indicate the lasso estimator $\widehat\theta(\lambda)$ 
computed using all but the $i^{th}$ observation.  Also,
we write the $\ell^2$-norm weighted by a matrix $A$ to be
$\norm{x}_A^2 = x^{\top}Ax$.

Lastly, let $\Lambda$ be a large, compact subset of $[0,\infty)$ the
specifics of which are unimportant. In practical situations, any
$\lambda \in [\max_j \widehat\theta_j(0),\infty)$ will
result in the same solution, namely $\widehat\theta_j(\lambda)=0$ for
all $j$, so any large finite upper bound is sufficient. Then define
\begin{align*}
  \lamCV := \argmin_{\lambda \in \Lambda} \widehat R_n(\lambda), &&
  \textrm{and}  
  && \lambda_n := \argmin_{\lambda \in \R^+} R_n(\lambda).
\end{align*}
For
$\widehat\theta(\lambda)$ to be consistent, it must hold that $\lambda
\rightarrow 0$ as $n\rightarrow\infty$. Hence, for some $N$, $n \geq
N$ implies $\lambda_n \in \Lambda \subset \R^+$.  Therefore, without loss of
generality, we assume that $\lambda_n \in \Lambda$ for all $n$.

We spend the balance of this paper discussing and proving the
following result:
\begin{theorem}[Main Theorem]
  \label{thm:mainTheorem} Suppose that Assumptions A and B hold and that
  there exists a 
  $C_{\theta}<\infty$ such that $\norm{\theta}_1 \leq C_{\theta}$.
  Also, suppose that $W_i \sim P_i$ are independently distributed and
  that there exists a $\tau<\infty$ independent of $i$ such that
  \[ 
  E_{P_i}\left[e^{tW_i}\right] \leq e^{\tau^2 t^2/2}
  \] 
  for all $t\in\mathbb{R}$.  Then
  \begin{equation} 
    R_n(\lamCV) - R_n(\lambda_n) \rightarrow 0.
    \label{eq:main}
  \end{equation}
\end{theorem}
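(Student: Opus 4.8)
The plan is to show that $\widehat R_n(\lambda)$ converges to $R_n(\lambda)$ uniformly over $\lambda \in \Lambda$, and then deduce the result by the standard $\argmin$ argument. Concretely, suppose we can establish
\begin{equation}
\sup_{\lambda \in \Lambda} \left| \widehat R_n(\lambda) - R_n(\lambda) \right| \convP 0.
\label{eq:uniform-plan}
\end{equation}
Then $R_n(\lamCV) - R_n(\lambda_n) \leq R_n(\lamCV) - \widehat R_n(\lamCV) + \widehat R_n(\lambda_n) - R_n(\lambda_n) \leq 2\sup_{\lambda}|\widehat R_n(\lambda) - R_n(\lambda)| \convP 0$, using $\widehat R_n(\lamCV) \leq \widehat R_n(\lambda_n)$ and $R_n(\lambda_n) \leq R_n(\lamCV)$; since the difference is also nonnegative, this gives convergence in probability, and a uniform-integrability/boundedness argument (the risk is bounded on $\Lambda$ under Assumptions A, B and $\norm{\theta}_1 \leq C_\theta$) upgrades it to convergence of the expectation, i.e.\ \eqref{eq:main}. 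So the entire content is \eqref{eq:uniform-plan}.

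To prove \eqref{eq:uniform-plan} I would split it into a pointwise statement plus stochastic equicontinuity (this is exactly why those terms appear in the abstract's keyword list). For the pointwise part, fix $\lambda$ and write $\widehat R_n(\lambda) = \frac1n\sum_i (Y_i - X_i^\top \thetaLOOnoj)^2$. Expanding, $Y_i - X_i^\top\thetaLOOnoj = \sigma W_i + X_i^\top(\theta - \thetaLOOnoj)$, so
\begin{equation}
\widehat R_n(\lambda) = \frac{\sigma^2}{n}\sum_i W_i^2 + \frac{2\sigma}{n}\sum_i W_i X_i^\top(\theta - \thetaLOOnoj) + \frac1n \sum_i \norm{\theta - \thetaLOOnoj}_{X_iX_i^\top}^2.
\label{eq:expand-plan}
\end{equation}
The first term concentrates at $\sigma^2$ by the sub-Gaussian assumption (a weak LLN for independent, uniformly sub-Gaussian summands). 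The key structural fact is that $\thetaLOOnoj$ is independent of $(X_i, W_i)$ by construction, and that it is close to the full-data estimator $\widehat\theta(\lambda)$: a perturbation/stability bound for the lasso objective (leaving out one of $n$ observations changes the empirical Gram matrix and cross term by $O(1/n)$, and since the objective is strongly convex with modulus $\gtrsim \cmin$ for large $n$ by Assumption A, one gets $\norm{\thetaLOOnoj - \widehat\theta(\lambda)}_2 = \Op(1/n)$, uniformly in $\lambda$). Replacing $\thetaLOOnoj$ by $\widehat\theta(\lambda)$ in the last term of \eqref{eq:expand-plan} turns it into $\norm{\widehat\theta(\lambda) - \theta}_{C_n}^2 + \op(1)$, which matches $R_n(\lambda) - \sigma^2$ after showing $\norm{\widehat\theta(\lambda) - \theta}_{C_n}^2$ concentrates around its expectation; the middle cross term is handled by the independence of $\thetaLOOnoj$ from $W_i$ together with boundedness of $\norm{X_i}_2$ and $\norm{\theta - \thetaLOOnoj}_1 \lesssim C_\theta$ (the lasso solution's $\ell^1$ norm is controlled because $\theta = 0$ is a feasible comparison point, giving $\lambda\norm{\widehat\theta(\lambda)}_1 \leq \frac{1}{2n}\norm{Y}_2^2$, hence persistence-type control on $\norm{\widehat\theta(\lambda)}_1$ uniformly over $\lambda$ bounded away from $0$, with a separate small-$\lambda$ argument).

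For the equicontinuity step, I would use that $\lambda \mapsto \widehat\theta(\lambda)$ is Lipschitz: from the KKT conditions, $\norm{\widehat\theta(\lambda) - \widehat\theta(\lambda')}_2 \leq C\,|\lambda - \lambda'|$ with $C$ depending on $\cmin$ and $n$ only through quantities that are $\Op(1)$ (again using strong convexity from Assumption A, $\sup_{\lambda}\norm{\widehat\theta(\lambda)}_1 = \Op(1)$, and Assumption B to bound $\norm{\X^\top W}_\infty/n$). The same holds for the leave-one-out estimators with a uniform constant. Composing, each summand $(Y_i - X_i^\top\thetaLOOnoj)^2$ is Lipschitz in $\lambda$ with a random constant whose average is $\Op(1)$, and $R_n(\lambda)$ is likewise Lipschitz; this gives asymptotic equicontinuity of $\widehat R_n - R_n$, which combined with pointwise convergence and compactness of $\Lambda$ yields \eqref{eq:uniform-plan}.

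The main obstacle I anticipate is the control of the cross term $\frac{2\sigma}{n}\sum_i W_i X_i^\top(\theta - \thetaLOOnoj)$ uniformly in $\lambda$: although $\thetaLOOnoj \perp W_i$ for each fixed $i$, one still needs this sum to be $\op(1)$ simultaneously over all $\lambda \in \Lambda$, which requires combining the per-$i$ independence with a chaining/equicontinuity argument and the uniform $\ell^1$ bound on $\thetaLOOnoj$; relatedly, propagating the lasso stability and Lipschitz bounds so that their constants are genuinely uniform in $\lambda$ near $0$ — where strong convexity of the penalized objective degrades — is the delicate point, and is presumably where Assumptions A and B are used most forcefully.
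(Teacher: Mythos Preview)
Your proposal is correct and follows essentially the same route as the paper: both reduce \eqref{eq:main} to $\sup_{\lambda\in\Lambda}|\widehat R_n(\lambda)-R_n(\lambda)|=o_\P(1)$ and establish the latter via stochastic equicontinuity, using that $\lambda\mapsto\lasso$ is Lipschitz (from the piecewise-linear lasso path together with Assumption~A) and that $\norm{\lasso-\thetaLOOnoj}_2\to 0$ uniformly; the paper merely organizes the algebra by matching $\norm{\X\lasso}^2$, $(\X\lasso)^\top\X\theta$, and $\norm{Y}^2$ terms rather than expanding the residual as you do, but the ingredients are identical. One simplification for your outline: since the lasso path is monotone in $\ell^1$, $\norm{\lasso}_1\leq\norm{\ols}_1$ for every $\lambda\geq 0$, so the uniform $\ell^1$ bound you need follows immediately from convergence of the OLS estimator and no separate small-$\lambda$ argument is required.
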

Essentially, this result states that under some conditions on the
design matrix $\X$ and the noise vector $W$, the predictive risk of
the lasso estimator with tuning parameter chosen via cross-validation
converges to the predictive risk of the lasso estimator with the
oracle tuning parameter. In other words, the typical procedure for a
data analyst is asymptotically equivalent to the optimal procedure. We
will take $\P=\prod_i P_i$ to be the $n$-fold product distribution of
the $W_i$'s and use $\E$ to denote the expected value
with respect to this product measure.

To prove this theorem, we show that $\sup_{\lambda \in
\Lambda}|\widehat R_n(\lambda) - R_n(\lambda)| \rightarrow 0$ in
probability.  Then (\ref{eq:main}) follows as
\begin{align*}
  R_n(\lamCV) - R_n(\lambda_n) 
  & =
  \left(R_n(\lamCV) - \widehat R_n(\lamCV)\right) +  
  \left(\widehat R_n(\lamCV) - R_n(\lambda_n)\right)  \\
  & \leq
  \left(R_n(\lamCV) - \widehat R_n(\lamCV)\right) + 
  \left(\widehat R_n(\lambda_n) - R_n(\lambda_n)\right) \\
  & \leq
  2 \sup_{\lambda\in\Lambda} \left(R_n(\lambda) - \widehat R_n(\lambda)\right) \\
  & = o_\P(1). 
\end{align*}
In fact, the term $R_n(\lamCV) - R_n(\lambda_n)$ 
is non-stochastic (the expectation in the risk
integrates out the randomness in the data) and therefore convergence in
probability implies sequential convergence and hence $o_\P(1)=o(1)$. 

We can write
\begin{align}
  \lefteqn{|R_n(\lambda) - \hat R_n(\lambda)| }\notag\\
  & =
  \Bigg|\frac{1}{n}\E ||\X\hat\theta(\lambda)||_2^2 + \frac{1}{n}||\X\theta||_2^2 -
  \frac{1}{n}2\E(\X\hat\theta(\lambda))^{\top}\X\theta + \sigma^2  \notag \\ 
  & \qquad - \frac{1}{n} \sum_{i=1}^n \left(Y_i^2  +
      (X_i^{\top}\thetaLOOnoj)^2 - 2Y_iX_i^{\top}\thetaLOOnoj
    \right)\Bigg| \notag \\ 
  & \leq
  \underbrace{\left| \frac{1}{n}\E ||\X\hat\theta(\lambda)||_2^2 - \frac{1}{n} \sum_{i=1}^n
      (X_i^{\top}\thetaLOOnoj)^2  \right|}_{(a)} +
  \underbrace{2\left| \frac{1}{n}\E(\X\hat\theta(\lambda))^{\top}\X\theta - \frac{1}{n}
      \sum_{i=1}^nY_iX_i^{\top}\thetaLOOnoj \right|}_{(b)} \notag\\
  \label{eq:decomp}
  &\qquad  + \underbrace{\left| \frac{1}{n}||\X\theta||_2^2  + \sigma^2 -  \frac{1}{n} \sum_{i=1}^n
      Y_i^2\right|}_{(c)}.
\end{align}
Our proof follows by addressing $(a)$, $(b)$, and $(c)$ in lexicographic order in Section
\ref{sec:proofs}. To show that each term converges in probability to
zero uniformly in $\lambda$, we will need a few preliminary results.

\section{Preliminary material}
\label{sec:prelim-material}

In this section, we present some definitions and lemmas which are
useful for proving risk consistency of the lasso with cross-validated
tuning parameter. First, we give some results regarding the
uniform convergence of measurable functions. Next, we use these
results to show that the leave-one-out lasso estimator converges
uniformly to the full-sample lasso estimator. Finally, we present a
concentration inequality for quadratic forms of sub-Gaussian random
variables. 

\subsection{Equicontinuity}
\label{sec:equicontinuity}

Our proof of Theorem \ref{thm:mainTheorem} uses a number of results
 relating uniform convergence with convergence in probability. The
essential message is that particular measurable functions behave
nicely over compact sets. Mathematically, such collections of functions are called
{\em stochastically equicontinuous}.

To fix ideas, we first present the
definition of stochastic equicontinuity in the context of statistical
estimation. Suppose that we are 
interested in estimating some functional of a parameter
$\beta$, $\overline{Q}_n(\beta)$,
using $\hat{Q}_n(\beta)$ where $\beta \in \mathcal{B}$.

\begin{definition}
  [Stochastic equicontinuity]
  If for every $\ep, \eta>0$ there exists a random variable
  $\Delta_n(\ep,\eta)$ and constant $n_0(\ep,\eta)$ such that for
  $n\geq n_0(\ep,\eta)$, $\P(|\Delta_n(\ep,\eta)|>\ep) < \eta$ and for
  each $\beta \in \mathcal{B}$ there is an open set $\mathcal{N}(\beta,\ep,\eta)$
  containing $\beta$ such that for $n\geq n_0(\ep,\eta)$,
  \[
  \sup_{\beta' \in \mathcal{N}(\beta,\ep,\eta)} \left|
    \hat{Q}_n(\beta')-\hat{Q}_n(\beta) \right| \leq
  \Delta_n(\ep,\eta),
  \]
  then we call $\{\hat{Q}_n\}$ \emph{stochastically equicontinuous over }$\mathcal{B}$.
\end{definition}

An alternative formulation of stochastic equicontinuity which is often
more useful can be found via a Lipschitz-type condition.
\begin{theorem}[Theorem 21.10 in~\citep{Davidson1994}]
  \label{thm:davidsonSE}
  Suppose there exists a random variable $B_n$ and a function $h$ such
  that $B_n=O_\P(1)$ and for all $\beta',\beta \in \mathcal{B}$,
  $|\hat{Q}_n(\beta') - \hat{Q}_n(\beta)| \leq B_n h(d(\beta',\beta))$,
  where $h(x) \downarrow 0$ as $x \downarrow 0$ and $d$ is a metric on
  $\mathcal{B}$. Then $\{\hat{Q}_n\}$ is stochastically equicontinuous.
\end{theorem}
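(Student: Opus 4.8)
## Proof Proposal for Theorem~\ref{thm:davidsonSE}

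The plan is to verify directly the definition of stochastic equicontinuity, using the hypothesized Lipschitz-type bound $|\hat{Q}_n(\beta') - \hat{Q}_n(\beta)| \leq B_n h(d(\beta',\beta))$ and the fact that $B_n = O_\P(1)$ to construct, for each $\ep,\eta>0$, the required random variable $\Delta_n(\ep,\eta)$, the constant $n_0(\ep,\eta)$, and the neighborhoods $\mathcal{N}(\beta,\ep,\eta)$. The overall structure is: first use $B_n = O_\P(1)$ to trap $B_n$ inside a deterministic bound with high probability; then use the monotone vanishing of $h$ at $0$ to choose a radius $\delta$ small enough that $h(\delta)$ times that bound is below $\ep$; then take $\mathcal{N}(\beta,\ep,\eta)$ to be the open $d$-ball of radius $\delta$.

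In more detail, fix $\ep,\eta>0$. Since $B_n = O_\P(1)$, there is a constant $M = M(\eta) < \infty$ and an index $n_1(\eta)$ such that $\P(|B_n| > M) < \eta$ for all $n \geq n_1(\eta)$. Because $h(x) \downarrow 0$ as $x \downarrow 0$, we may choose $\delta = \delta(\ep,\eta) > 0$ so that $h(x) \leq \ep/M$ for all $0 \leq x \leq \delta$ (here one uses that $h$ is monotone near $0$, so such a $\delta$ exists; if $h$ is not defined to be nonnegative away from $0$ one simply restricts attention to the relevant range of $d$-values). Now set
\[
  \Delta_n(\ep,\eta) := |B_n|\, h(\delta), \qquad n_0(\ep,\eta) := n_1(\eta),
\]
and for each $\beta \in \mathcal{B}$ let $\mathcal{N}(\beta,\ep,\eta) := \{\beta' \in \mathcal{B} : d(\beta',\beta) < \delta\}$, which is open in the metric topology and contains $\beta$.

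It remains to check the two requirements. First, $\P(|\Delta_n(\ep,\eta)| > \ep) = \P(|B_n|\, h(\delta) > \ep) \leq \P(|B_n| > M) < \eta$ for $n \geq n_0(\ep,\eta)$, using $h(\delta) \leq \ep/M$. Second, for any $\beta' \in \mathcal{N}(\beta,\ep,\eta)$ we have $d(\beta',\beta) < \delta$, so by monotonicity of $h$ and the Lipschitz-type hypothesis,
\[
  \left| \hat{Q}_n(\beta') - \hat{Q}_n(\beta) \right| \leq B_n\, h(d(\beta',\beta)) \leq |B_n|\, h(\delta) = \Delta_n(\ep,\eta),
\]
and taking the supremum over $\beta' \in \mathcal{N}(\beta,\ep,\eta)$ preserves the inequality. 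This exhibits $\{\hat{Q}_n\}$ as stochastically equicontinuous over $\mathcal{B}$.

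The only real subtlety — and the step I would be most careful about — is the interplay between the $O_\P(1)$ control on $B_n$ and the uniformity requirement in the definition: the same $\Delta_n$, $n_0$, and radius $\delta$ must work simultaneously for every $\beta \in \mathcal{B}$. This is automatic here because the Lipschitz bound is global in $\beta,\beta'$ and $B_n$ does not depend on $\beta$, so $\delta$ depends only on $\ep,\eta$ through $h$ and $M$, not on the base point. A secondary technical point is ensuring $h$ admits a threshold radius $\delta$ with $h \leq \ep/M$ on $[0,\delta]$; this follows from $h(x)\downarrow 0$ as $x \downarrow 0$, which is exactly the hypothesis. Since this is quoted as Theorem~21.10 of~\citep{Davidson1994}, the argument above is essentially a restatement of the standard proof, and no genuinely new obstacle arises.
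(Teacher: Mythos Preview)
Your argument is correct and is essentially the standard proof of this result. Note, however, that the paper does not supply its own proof of this statement: it is quoted as Theorem~21.10 of \citep{Davidson1994} and used as a black box, so there is no in-paper proof to compare against. Your direct verification of the stochastic equicontinuity definition---using $B_n=O_\P(1)$ to fix $M$ and then the monotone vanishing of $h$ at $0$ to pick a uniform radius $\delta$---is exactly the textbook argument, and the uniformity over $\beta$ indeed comes for free because neither $B_n$ nor the Lipschitz bound depends on the base point.
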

The importance of stochastic equicontinuity is in showing uniform
convergence, as is expressed in the following two results. 
\begin{theorem}
  [Theorem 2.1 in~\citep{Newey1991}]
  \label{thm:neweyUniform}
  If $\mathcal{B}$ is compact, $|\hat{Q}_n(\beta) -
  \overline{Q}_n(\beta)| = o_\P(1)$ for each $\beta \in \mathcal{B}$, 
  $\{\hat{Q}_n\}$ is stochastically equicontinuous over $\mathcal{B}$, and
  $\{\overline{Q}_n\}$ is equicontinuous, then
  $\sup_{\beta \in \mathcal{B}}
  |\hat{Q}_n(\beta) - \overline{Q}_n(\beta)| = o_\P(1)$.
\end{theorem}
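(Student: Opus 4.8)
The plan is to prove uniform convergence by the classical finite-subcover argument: convert the pointwise hypothesis into a uniform statement using the two equicontinuity conditions to control oscillations and using compactness to reduce to finitely many points. Throughout I would fix arbitrary $\ep,\eta>0$ and aim to exhibit an $N$ so that $\P\big(\sup_{\beta\in\mathcal{B}}|\hat{Q}_n(\beta)-\overline{Q}_n(\beta)|>\ep\big)<\eta$ for all $n\geq N$, which is exactly the desired $\op(1)$ conclusion.

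First I would apply the definition of stochastic equicontinuity with parameters $(\ep/3,\eta/2)$. This produces a single random variable $\Delta_n$ with $\P(|\Delta_n|>\ep/3)<\eta/2$ for $n$ large, together with, for each $\beta$, an open neighborhood $\mathcal{N}_{\mathrm{SE}}(\beta)$ on which the oscillation of $\hat{Q}_n$ about $\beta$ is at most $|\Delta_n|$. The crucial feature I would exploit is that the same $\Delta_n$ bounds the oscillation simultaneously over every such neighborhood. In parallel, equicontinuity of $\{\overline{Q}_n\}$ supplies, for each $\beta$, a neighborhood $\mathcal{N}_{\mathrm{EC}}(\beta)$ on which $|\overline{Q}_n(\beta')-\overline{Q}_n(\beta)|<\ep/3$ for every $\beta'$ and, importantly, for every $n$ at once. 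Intersecting, $\mathcal{N}(\beta):=\mathcal{N}_{\mathrm{SE}}(\beta)\cap\mathcal{N}_{\mathrm{EC}}(\beta)$ is still an open neighborhood of $\beta$, and both oscillation bounds hold on it.

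Next I would invoke compactness of $\mathcal{B}$: the collection $\{\mathcal{N}(\beta)\}_{\beta\in\mathcal{B}}$ is an open cover, so it admits a finite subcover with centers $\beta_1,\dots,\beta_K$. Because the pointwise hypothesis $|\hat{Q}_n(\beta_k)-\overline{Q}_n(\beta_k)|=\op(1)$ holds at each of these finitely many points, the maximum over $k$ is also $\op(1)$; hence there is an $N$ (also taken large enough for the $\Delta_n$ bound) so that $\P\big(\max_{1\le k\le K}|\hat{Q}_n(\beta_k)-\overline{Q}_n(\beta_k)|>\ep/3\big)<\eta/2$ for $n\geq N$. The argument then closes by a triangle-inequality decomposition: any $\beta\in\mathcal{B}$ lies in some $\mathcal{N}(\beta_k)$, whence
\begin{align*}
  |\hat{Q}_n(\beta)-\overline{Q}_n(\beta)|
  &\le |\hat{Q}_n(\beta)-\hat{Q}_n(\beta_k)| + |\hat{Q}_n(\beta_k)-\overline{Q}_n(\beta_k)| + |\overline{Q}_n(\beta_k)-\overline{Q}_n(\beta)| \\
  &\le |\Delta_n| + \max_{1\le k\le K}|\hat{Q}_n(\beta_k)-\overline{Q}_n(\beta_k)| + \ep/3.
\end{align*}
Taking the supremum over $\beta$, the event $\{\sup_\beta|\hat{Q}_n(\beta)-\overline{Q}_n(\beta)|>\ep\}$ forces either $|\Delta_n|>\ep/3$ or the finite maximum to exceed $\ep/3$, and a union bound gives probability below $\eta/2+\eta/2=\eta$.

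The main obstacle is not any single estimate but the bookkeeping that makes the three hypotheses interlock: I must fix the stochastic-equicontinuity tolerance at $\ep/3$ and the probability budget at $\eta/2$ \emph{before} extracting the subcover, and I must verify that equicontinuity of $\{\overline{Q}_n\}$ delivers a neighborhood valid for all $n$ simultaneously—otherwise the finite cover would depend on $n$ and the reduction to finitely many points would collapse. The other delicate point is that stochastic equicontinuity furnishes one $\Delta_n$ controlling all neighborhoods at once, so that after passing to the finite subcover the oscillation term is handled by this single random variable rather than by a maximum growing with $K$; keeping $K$ fixed (independent of $n$) is precisely what compactness buys and what allows the pointwise-to-finite-maximum step to remain $\op(1)$.
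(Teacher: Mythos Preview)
Your argument is correct and is precisely the classical compactness-plus-equicontinuity proof. Note, however, that the paper does not supply its own proof of this statement: it is quoted as Theorem~2.1 of \citep{Newey1991} and used as a black box. Your finite-subcover argument---fix $(\ep/3,\eta/2)$, intersect the stochastic and deterministic equicontinuity neighborhoods, extract a finite subcover, and close with the three-term triangle inequality and a union bound---is exactly the standard proof that Newey gives, so there is nothing to contrast.
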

This theorem allows us to show uniform convergence of estimators
$\hat{Q}_n(\beta)$ of statistical functionals to $\overline{Q}_n(\beta)$
over compact sets $\mathcal{B}$. However, we may also be interested in
the uniform convergence of random quantities to each other. While one
could use the above theorem to show such a result, the following
theorem of \citep{Davidson1994} is often simpler.

\begin{theorem}
  [\citep{Davidson1994}]
  If $\mathcal{B}$ is compact, then    
  $ \sup_{\beta\in\mathcal{B}} G_n(\beta) = o_\P(1)$ if and only if
  $G_n(\beta)=o_\P (1)$ for each
  $\beta$ in a dense subset of $\mathcal{B}$ and $\{G_n(\beta)\}$ is
  stochastically equicontinuous. 
  \label{thm:davidsonUniform}
\end{theorem}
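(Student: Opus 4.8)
The plan is to prove the equivalence by treating the two implications separately, with the forward (necessity) direction being essentially immediate and the reverse (sufficiency) direction carrying the real content via a compactness argument. Throughout I read $G_n(\beta) = \op(1)$ and $\sup_{\beta} G_n(\beta) = \op(1)$ as convergence to zero in probability of $|G_n(\beta)|$ and $\sup_{\beta}|G_n(\beta)|$ respectively, which is the form in which the result is applied (to nonnegative discrepancies such as the terms $(a)$, $(b)$, $(c)$); the stochastic equicontinuity definition already controls oscillations in absolute value, so this causes no friction.

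For necessity, suppose $\sup_{\beta\in\mathcal{B}}|G_n(\beta)| = \op(1)$. Pointwise convergence on a dense subset (indeed on all of $\mathcal{B}$) is trivial, since $|G_n(\beta)| \leq \sup_{\beta'\in\mathcal{B}}|G_n(\beta')|$ for each fixed $\beta$. For stochastic equicontinuity I would simply exhibit the required objects: set $\Delta_n := 2\sup_{\beta'\in\mathcal{B}}|G_n(\beta')|$ and take the neighborhood $\mathcal{N}(\beta,\ep,\eta) = \mathcal{B}$ for every $\beta$. The triangle inequality then gives $\sup_{\beta'\in\mathcal{B}}|G_n(\beta') - G_n(\beta)| \leq 2\sup_{\beta'}|G_n(\beta')| = \Delta_n$, and because $\Delta_n = \op(1)$ the probability bound $\P(|\Delta_n| > \ep) < \eta$ holds for all large $n$. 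Hence $\{G_n\}$ is stochastically equicontinuous.

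The substance is the sufficiency direction. I would fix $\ep, \eta > 0$. Stochastic equicontinuity supplies a single random variable $\Delta_n = \Delta_n(\ep,\eta)$, a threshold $n_0$, and for each $\beta$ an open neighborhood $\mathcal{N}(\beta)$ such that $\P(|\Delta_n| > \ep) < \eta$ and $\sup_{\gamma \in \mathcal{N}(\beta)}|G_n(\gamma) - G_n(\beta)| \leq \Delta_n$ for $n \geq n_0$; crucially the same $\Delta_n$ bounds every oscillation. The family $\{\mathcal{N}(\beta) : \beta\in\mathcal{B}\}$ is an open cover of the compact set $\mathcal{B}$, so I extract a finite subcover $\mathcal{N}(\beta_1), \ldots, \mathcal{N}(\beta_K)$. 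The centers $\beta_k$ need not lie in the dense set $D$ on which pointwise convergence is known, so for each $k$ I choose a point $d_k \in D \cap \mathcal{N}(\beta_k)$, which exists because $\mathcal{N}(\beta_k)$ is open, nonempty, and $D$ is dense. For an arbitrary $\beta' \in \mathcal{B}$, picking $k$ with $\beta' \in \mathcal{N}(\beta_k)$ and running the triangle inequality through the center $\beta_k$ yields $|G_n(\beta')| \leq |G_n(\beta') - G_n(\beta_k)| + |G_n(\beta_k) - G_n(d_k)| + |G_n(d_k)| \leq 2\Delta_n + \max_{1\leq k \leq K}|G_n(d_k)|$, and taking the supremum over $\beta'$ gives $\sup_{\beta'\in\mathcal{B}}|G_n(\beta')| \leq 2\Delta_n + \max_{1\leq k\leq K}|G_n(d_k)|$.

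The conclusion then follows from finiteness. Since $K$ and the points $d_k$ depend only on $\ep,\eta$ and not on $n$, the quantity $\max_{1\leq k \leq K}|G_n(d_k)|$ is a maximum of finitely many terms each of which is $\op(1)$ by the assumed pointwise convergence on $D$, hence is itself $\op(1)$; I choose $n_1$ so that $\P(\max_k |G_n(d_k)| > \ep) < \eta$ for $n \geq n_1$. Combining with the bound on $\Delta_n$, for $n \geq \max(n_0, n_1)$ the event $\{\sup_{\beta'}|G_n(\beta')| > 3\ep\}$ forces either $\{|\Delta_n| > \ep\}$ or $\{\max_k|G_n(d_k)| > \ep\}$, so $\P(\sup_{\beta'}|G_n(\beta')| > 3\ep) < 2\eta$, and since $\ep,\eta$ were arbitrary this gives $\sup_{\beta\in\mathcal{B}}|G_n(\beta)| = \op(1)$. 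The main obstacle is the interplay in this last direction between the deterministic compactness step and the probabilistic limits: the finite subcover must be obtained for fixed $\ep,\eta$ \emph{before} letting $n\to\infty$, so that $K$ stays bounded and the finite-max-of-$\op(1)$ argument applies. The dense-subset approximation via the $d_k$, which replaces the possibly inaccessible centers $\beta_k$ by points where pointwise convergence is available, is the one genuinely nonroutine maneuver.
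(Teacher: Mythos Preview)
The paper does not actually supply a proof of this theorem; it is quoted from \citep{Davidson1994} as a preliminary tool and left unproved. So there is nothing in the paper to compare your argument against line by line.

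That said, your argument is correct and is essentially the standard proof one finds in Davidson's book. The necessity direction is handled in the expected trivial way, and for sufficiency you use exactly the compactness-plus-finite-subcover mechanism that drives the classical argument: extract finitely many equicontinuity neighborhoods, pick dense representatives inside them, and bound the supremum by a finite maximum plus a multiple of the common oscillation bound $\Delta_n$. Your care in fixing $\ep,\eta$ before extracting the subcover, so that $K$ and the $d_k$ are independent of $n$, is precisely the point where a careless proof can go wrong, and you handle it correctly. The replacement of the centers $\beta_k$ by dense points $d_k$ via a second application of the oscillation bound is also the standard maneuver. In short: your proof is sound and matches the route Davidson takes, even though the present paper simply cites the result without reproducing it.
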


\subsection{Uniform convergence of lasso estimators}
\label{sec:unif-conv-lasso}

Using stochastic equicontinuity, we prove two lemmas about
lasso estimators which, while intuitive, are nonetheless
novel. The first shows that the lasso estimator converges uniformly over
$\Lambda$ to its expectation. The second shows that the lasso estimator
computed using the full sample converges in probability uniformly 
over $\Lambda$ to the lasso estimator computed with all but one observation.  

Before stating our lemmas, we include without proof some standard results about
uniform convergence of functions.
A function $f: [a,b] \rightarrow
\mathbb{R}$ has the Luzin $N$ property if, for all $N \subset [a,b]$
that has Lebesgue measure zero, $f(N)$ has Lebesgue measure zero as
well.  Also, a function $f$ is of bounded variation if and only if it
can be written as $f = f_1 - f_2$ for non-decreasing functions $f_1$
and $f_2$.

\begin{theorem}
  A function $f$ is absolutely continuous if and only if it is of
  bounded variation, continuous, and has the Luzin $N$ 
  property.
  \label{thm:absCont}
\end{theorem}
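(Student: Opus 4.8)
The plan is to prove the two implications separately: the forward direction (absolute continuity implies the three listed properties) is routine, while the converse is the substantive Banach--Zaretsky content. Suppose first that $f\colon[a,b]\to\R$ is absolutely continuous. Continuity is immediate from the $\ep$--$\delta$ definition applied to a single interval, and bounded variation follows by partitioning $[a,b]$ into finitely many consecutive blocks of length less than the $\delta$ corresponding to $\ep=1$, so that the variation of $f$ over each block is at most $1$. For the Luzin $N$ property, fix a Lebesgue-null set $Z\subseteq[a,b]$ and $\ep>0$, let $\delta>0$ be as in the definition of absolute continuity, and enclose $Z$ in an open set $U$ with $m(U)<\delta$. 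Write $U$ as a countable disjoint union of open intervals; on the closure of each, continuity of $f$ realizes the oscillation of $f$ as $|f(x')-f(x)|$ for two points $x,x'$ of that interval, the total length of these (disjoint) pairs is less than $\delta$, and so absolute continuity bounds $\sum|f(x')-f(x)|$, hence the outer measure of $f(Z)$, by $\ep$. Letting $\ep\downarrow 0$ gives $m^*(f(Z))=0$.

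For the converse, suppose $f$ is continuous, of bounded variation, and has the Luzin $N$ property. Since $f$ is of bounded variation, $f'$ exists almost everywhere, is measurable, and satisfies $\int_a^b|f'|\le V_a^b(f)<\infty$, so $f'\in L^1[a,b]$; here and below $f'$ is taken to be $0$ at points of non-differentiability. The heart of the argument is the estimate
\[
  m^*\big(f(E)\big)\le \int_E |f'|\,dm \qquad\text{for every measurable } E\subseteq[a,b].
\]
To get this, first prove the covering lemma: if $f$ is differentiable at every point of a (not necessarily measurable) set $S$ with $|f'|\le M$ there, then $m^*(f(S))\le M\,m^*(S)$. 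Write $S=\bigcup_k S_k$, where $S_k$ is the set of points $x$ at which $|f(y)-f(x)|\le(M+\ep)|y-x|$ whenever $|y-x|<1/k$; cover $S_k$ by an open set of measure close to $m^*(S_k)$, subdivide it into intervals of length less than $1/k$, and observe that on each such interval $f$ shrinks the diameter of $S_k$ by at most the factor $M+\ep$; a standard outer-measure computation followed by $k\to\infty$ and $\ep\downarrow0$ finishes the lemma. Then decompose $E$ into the (null) set on which $f$ is not differentiable---whose image is null by the Luzin $N$ property---together with the level sets $E_{n,j}=\{x\in E:j/n\le|f'(x)|<(j+1)/n\}$; applying the lemma on each $E_{n,j}$, summing, using $\frac{j+1}{n}m(E_{n,j})\le\int_{E_{n,j}}|f'|+\frac{1}{n}m(E_{n,j})$, and letting $n\to\infty$ yields the displayed inequality.

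With this inequality in hand, absolute continuity follows quickly. Given $\ep>0$, absolute continuity of the Lebesgue integral provides $\delta>0$ such that $\int_S|f'|<\ep$ whenever $m(S)<\delta$. For disjoint open intervals $(a_k,b_k)\subseteq[a,b]$ with $\sum_k(b_k-a_k)<\delta$, put $S=\bigcup_k(a_k,b_k)$; since $f$ is continuous, $f([a_k,b_k])$ is an interval containing both $f(a_k)$ and $f(b_k)$, so $|f(b_k)-f(a_k)|\le m^*\big(f((a_k,b_k))\big)\le\int_{(a_k,b_k)}|f'|$, and summing over the disjoint intervals gives $\sum_k|f(b_k)-f(a_k)|\le\int_S|f'|<\ep$.

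I expect the covering lemma together with the way the Luzin $N$ hypothesis enters the integral bound to be the main obstacle: bounded variation by itself only gives $m^*(f(E))\le\int_E|f'|$ up to the image of the non-differentiability set, and it is exactly the Luzin $N$ property that annihilates this term --- the Cantor function (continuous, of bounded variation, but mapping a null set onto $[0,1]$) shows it cannot be removed. The remaining steps are bookkeeping with outer measures, Fatou's lemma, and the absolute continuity of the Lebesgue integral.
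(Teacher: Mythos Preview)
Your proof is a correct and essentially standard proof of the Banach--Zaretsky theorem. The paper, however, does not prove this statement at all: it is introduced with the sentence ``we include without proof some standard results about uniform convergence of functions,'' and is used only as background for the Lipschitz arguments that follow. So there is no approach in the paper to compare against; you have simply supplied the classical proof where the authors chose to cite it.
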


\begin{theorem}
  If a function $f:[a,b] \rightarrow \mathbb{R}$ is absolutely
  continuous, and hence differentiable almost everywhere, and satisfies
  $|f'(x)| \leq C_L$ for almost all $x \in [a,b]$ with respect to
  Lebesgue measure, then it is Lipschitz continuous with constant $C_L$.
  \label{thm:lipschitz}
\end{theorem}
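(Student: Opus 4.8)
The plan is to reduce the Lipschitz bound to a single application of the Fundamental Theorem of Calculus for the Lebesgue integral, which is the analytic heart of absolute continuity. First I would invoke the standard fact that an absolutely continuous $f:[a,b]\to\mathbb{R}$ is not merely differentiable almost everywhere but is recovered from its derivative by integration: for every $x\le y$ in $[a,b]$,
\[
f(y)-f(x)=\int_x^y f'(t)\,dt.
\]
This representation---rather than the pointwise existence of $f'$ alone---is what I would extract from the hypothesis of absolute continuity (it is the substantive half of the Lebesgue theory underlying Theorem~\ref{thm:absCont}).

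Given this representation, I would fix arbitrary $x,y\in[a,b]$, assume without loss of generality that $x\le y$, and estimate directly. By the triangle inequality for integrals I would bound $|f(y)-f(x)|$ by $\int_x^y |f'(t)|\,dt$, and then use the almost-everywhere bound $|f'(t)|\le C_L$: since the hypothesis fails only on a Lebesgue-null set, which contributes nothing to the integral, I would replace the integrand by the constant $C_L$ to obtain $\int_x^y |f'(t)|\,dt \le C_L\,(y-x)$. Chaining these gives $|f(y)-f(x)|\le C_L|y-x|$, the desired Lipschitz estimate with constant $C_L$. The only point requiring care is to note that discarding the null set on which $|f'|>C_L$ (or on which $f'$ fails to exist) leaves the value of the integral unchanged, so the pointwise failure of the bound on that set is harmless.

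The main obstacle is entirely contained in justifying the integral representation itself; the remaining inequalities are routine. Absolute continuity guarantees---via the definition through finite sums over disjoint subintervals, or equivalently via the bounded-variation-plus-Luzin-$N$ characterization of Theorem~\ref{thm:absCont}---that $f'$ is Lebesgue integrable and that $f$ is its indefinite integral. I would cite this as the standard Fundamental Theorem of Calculus for absolutely continuous functions rather than reprove it, since reproving it is a genuine piece of Lebesgue differentiation theory lying outside the scope of this paper. With that representation in hand, the boundedness of $f'$ converts the indefinite integral into a bound linear in $|y-x|$, completing the argument.
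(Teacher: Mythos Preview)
Your argument is correct and is exactly the standard proof via the Fundamental Theorem of Calculus for Lebesgue integrals. Note, however, that the paper does not actually prove this theorem: it is listed among the ``standard results about uniform convergence of functions'' that the authors explicitly ``include without proof,'' so there is no paper proof to compare against. Your write-up would serve perfectly well as the omitted justification.
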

Throughout this paper, we use $C_L$ as generic notation for a Lipschitz constant; its actual value changes from line to line.  The following result is useful for showing the uniform convergence $\lasso$.

\begin{proposition}
The random function $\lasso$ is Lipschitz continuous over $\Lambda$.  That is, there exists $C_L < \infty$
such that 
for any $\lambda,\lambda' \in \Lambda$,
\begin{equation}
\norm{\hat\theta(\lambda) - \hat\theta(\lambda') }_2 \leq C_L |\lambda - \lambda'|.
\end{equation}
Additionally, $C_L = O(1)$ as $n \rightarrow \infty$.
\label{prop:lassoLipschitz}
\end{proposition}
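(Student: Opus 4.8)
The plan is to exhibit the Lipschitz constant explicitly using the variational characterization of the lasso. First I would recall the Karush–Kuhn–Tucker conditions for~\eqref{eq:regularized}: $\hat\theta(\lambda)$ is characterized by $\frac{1}{n}\X^\top(\X\hat\theta(\lambda) - Y) + \lambda s = 0$, where $s \in \partial\norm{\hat\theta(\lambda)}_1$ is a subgradient. Taking $\lambda,\lambda' \in \Lambda$ with solutions $\hat\theta := \hat\theta(\lambda)$, $\hat\theta' := \hat\theta(\lambda')$ and corresponding subgradients $s,s'$, I would subtract the two stationarity conditions, take the inner product with $\hat\theta - \hat\theta'$, and use the monotonicity of the subdifferential of $\norm{\cdot}_1$, namely $(s - s')^\top(\hat\theta - \hat\theta') \geq 0$ together with $\lambda s^\top(\hat\theta-\hat\theta') \le \lambda\norm{\hat\theta}_1 - \lambda \norm{\hat\theta'}_1$ and the symmetric inequality for $\lambda'$. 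This yields, after combining, a bound of the form
\[
\frac{1}{n}\norm{\X(\hat\theta - \hat\theta')}_2^2 \leq |\lambda - \lambda'|\,\bigl(\norm{\hat\theta}_1 + \norm{\hat\theta'}_1\bigr).
\]

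Next I would control the right-hand side. Since $\lambda = 0$ is a boundary case giving the OLS solution and larger $\lambda$ only shrinks, a crude bound is $\norm{\hat\theta(\lambda)}_1 \le \norm{\hat\theta(0)}_1 = \norm{(\X^\top\X)^{-1}\X^\top Y}_1$; alternatively, comparing the objective at $\hat\theta(\lambda)$ to its value at $0$ gives $\lambda\norm{\hat\theta(\lambda)}_1 \le \frac{1}{2n}\norm{Y}_2^2$. Either way, on the compact set $\Lambda$ bounded away from $0$ the $\ell^1$-norms are uniformly bounded by some $B_n$. Then, invoking Assumption~A, for $n$ large $\textrm{eigen}_{\min}(C_n) \ge \cmin/2 > 0$, so $\frac{1}{n}\norm{\X x}_2^2 = \norm{x}_{C_n}^2 \ge (\cmin/2)\norm{x}_2^2$, which converts the display above into
\[
\norm{\hat\theta - \hat\theta'}_2^2 \le \frac{2 B_n}{\cmin}\,|\lambda - \lambda'|.
\]
Because $\lambda\mapsto\hat\theta(\lambda)$ is also known to be piecewise linear and continuous (the lasso path), a square-root of $|\lambda-\lambda'|$ bound can be upgraded to a genuine Lipschitz bound; one clean route is to apply Theorems~\ref{thm:absCont} and~\ref{thm:lipschitz}, noting the path is piecewise linear hence absolutely continuous with derivative bounded on each of the finitely many linear segments, and the segment slopes are $(\X_A^\top\X_A)^{-1}$-type matrices whose operator norms are controlled by $\cmin$. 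This gives the coordinatewise, hence $\ell^2$, Lipschitz constant $C_L$.

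Finally, for the $O(1)$ claim I would track the dependence on $n$: $\cmin$ is fixed by Assumption~A, and the uniform $\ell^1$-bound $B_n$ needs to be $O_{\P}(1)$ or better. Here I would use $\norm{Y}_2^2/n = \norm{\X\theta + \sigma W}_2^2/n$, which by Assumptions~A and~B and $\norm{\theta}_1 \le C_\theta$ (so $\norm{\X\theta}_2^2/n$ is bounded) plus the sub-Gaussian tail on $W$ concentrates around $\norm{\X\theta}_2^2/n + \sigma^2\mathbb{E}W^2$, a bounded quantity; combined with $\lambda$ bounded away from $0$ on $\Lambda$ this gives $B_n = O_{\P}(1)$, so $C_L = O_{\P}(1)$. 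I expect the main obstacle to be making the passage from the energy bound on $\norm{\X(\hat\theta-\hat\theta')}_2$ to a true (linear, not square-root) Lipschitz bound fully rigorous without hand-waving about the path geometry; the cleanest fix is to lean on the absolute-continuity characterization (Theorems~\ref{thm:absCont}–\ref{thm:lipschitz}) applied to each coordinate $\lambda \mapsto \hat\theta_j(\lambda)$, which is monotone between knots and globally of bounded variation, so that bounding $|\hat\theta_j'(\lambda)|$ a.e.\ suffices.
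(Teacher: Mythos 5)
Your proposal ultimately lands on the paper's own argument: the decisive step in both is that the lasso path is piecewise linear in $\lambda$ with segment slopes of the form $n(\X_{\mathcal{E}}^{\top}\X_{\mathcal{E}})^{-1}s_{\mathcal{E}}$, whose norms are uniformly controlled because Assumption A forces $\frac{1}{n}\textrm{eigen}_{\min}(\X_{\mathcal{E}}^{\top}\X_{\mathcal{E}})\geq \cmin/2$ for all active sets $\mathcal{E}$ and large $n$; this yields both the Lipschitz property and $C_L=O(1)$. Your preliminary KKT/monotonicity computation is correct as far as it goes, but as you note it only delivers $\norm{\lasso-\hat\theta(\lambda')}_2^2 \lesssim |\lambda-\lambda'|$, a H\"older-$1/2$ modulus, so it contributes nothing to the final bound and could be deleted. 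It is worth pointing out, though, that a small sharpening makes that route self-contained: in the energy inequality, bound $(\lambda-\lambda')\bigl(\norm{\lasso}_1-\norm{\hat\theta(\lambda')}_1\bigr)$ by $|\lambda-\lambda'|\,\norm{\lasso-\hat\theta(\lambda')}_1 \leq |\lambda-\lambda'|\sqrt{p}\,\norm{\lasso-\hat\theta(\lambda')}_2$ rather than by the sum of the two $\ell^1$-norms; after dividing by $\norm{\lasso-\hat\theta(\lambda')}_2$ and using $\norm{x}_{C_n}^2\geq(\cmin/2)\norm{x}_2^2$, this gives the genuine Lipschitz bound $C_L \leq 2\sqrt{p}/\cmin$ directly, with no appeal to path geometry at all --- arguably cleaner than the paper's proof. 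Two minor corrections: $\Lambda$ is \emph{not} bounded away from $0$ (it must contain $\lambda_n\to 0$), so your second $\ell^1$-bound $\lambda\norm{\lasso}_1\leq\frac{1}{2n}\norm{Y}_2^2$ does not give a uniform bound over $\Lambda$ (use the monotonicity $\norm{\lasso}_1\leq\norm{\ols}_1$ instead); and your worry about $B_n=\Op(1)$ is moot in the final argument, since the slope-based constant depends only on $\X$ and not on $Y$.
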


\begin{proof}
 The solution path of the lasso is piecewise linear over $\lambda$ with a finite number of
  `kinks.'  Using the notation developed in \citep[Section 3.1]{Tibshirani2013}, over each such interval, the
  nonzero entries in $\lasso$ behave as a linear function with slope $n(\X_{\mathcal{E}}^{\top}
  \X_{\mathcal{E}})^{-1}s_\mathcal{E}$, where $\mathcal{E} \subset
  \{1,\ldots,p\}$ is the set of the indices of the active variables, $s_\mathcal{E}$ is the vector of 
  signs, and
  $\X_{\mathcal{E}}$ is the feature matrix with columns restricted to the
  indices in $\mathcal{E}$.
  % \footnote{Note to Dan: I think this $n$ is in the right spot. In
  %   Ryan's paper, I'm taking equation (2)  and putting a 1/n on the
  %   LHS. This makes his $\lambda$ our $n\lambda$. So, his equation  
  % (15) should have a $nd$ for the slope term.} 

  Therefore, as $ \norm{n(\X_{\mathcal{E}}^{\top}
    \X_{\mathcal{E}})^{-1}s_\mathcal{E}}_2 \leq \norm{n(\X_{\mathcal{E}}^{\top}
    \X_{\mathcal{E}})^{-1}}_2$,
  $\hat\theta(\lambda)$ is Lipschitz continuous with 
  \[
  C_L = \max_{\mathcal{E} \subset \{1,\ldots,p\}} 
  \norm{n(\X_{\mathcal{E}}^{\top}\X_{\mathcal{E}})^{-1}}_2
    \]
  By Assumption A, for any $\mathcal{E}$, $\frac{1}{n} \X_{\mathcal{E}}^{\top} \X_{\mathcal{E}}
  \rightarrow C_{\mathcal{E}}$.  Also, $\textrm{eigen}_{\min}(C_{\mathcal{E}}) \geq \cmin$
  for any $\mathcal{E}$.  Fix $\epsilon = \cmin/2$. Then, there exists
  an $N$ such that for all $n \geq N$ and any $\mathcal{E}$,
  \begin{equation}
  \frac{1}{n} \textrm{eigen}_{\min} (\X_{\mathcal{E}}^{\top}\X_{\mathcal{E}}) \geq \epsilon.
  \label{eq:lassoLipBound}
  \end{equation}
  Therefore, for $n$ large enough, $C_L \leq \frac{1}{\epsilon} < \infty$, which is independent of $n$.
%  and
%  \[
%  C_{L,(i)} = \max_{\mathcal{E} \subset \{1,\ldots,p\}} ||(
%  \X_{\mathcal{E},(i)}^{\top}\X_{\mathcal{E},(i)})^{\dagger}|| _2.
%  \]
%  Note that, for the full matrix $\X$,  $\X_{\mathcal{E}}^{\top}\X_{\mathcal{E}} = I_{|\mathcal{E}|}$, and so $C_L= 1/n$
%for any $\mathcal{E}$.
%  As $ \X_{\mathcal{E},(i)}^{\top}\X_{\mathcal{E},(i)} 
%  = 
%  \sum_{j\neq i} X_{\mathcal{E},j}X_{\mathcal{E},j}^{\top}
%   = \sum_{ j} X_{\mathcal{E},j}X_{\mathcal{E},j}^{\top} - X_{\mathcal{E},i}X_{\mathcal{E},i}^{\top}$,
%   \[ 
%   ||\X_{\mathcal{E},(i)}^{\top}\X_{\mathcal{E},(i)}|| _2 \geq 
%   ||\X_{\mathcal{E}}^{\top}\X_{\mathcal{E}}||_2 - ||X_{\mathcal{E},i}X_{\mathcal{E},i}^{\top}||_2,
%  \]
%which implies
%   \[ 
%||(
%  \X_{\mathcal{E},(i)}^{\top}\X_{\mathcal{E},(i)})^{\dagger}|| _2 \leq 
%    \frac{1}
%  {||\X_{\mathcal{E}}^{\top}\X_{\mathcal{E}}||_2 - ||X_{\mathcal{E},i}X_{\mathcal{E},i}^{\top}||_2}
%  \leq
%    \frac{1}
%  { ||\X_{\mathcal{E}}^{\top}\X_{\mathcal{E}}||_2}
%  =
% C_L
%  \]
\end{proof}
\begin{lemma}
For any $i = 1,\ldots,n$,
  \[
  \sup_{\lambda \in \Lambda} ||\hat\theta(\lambda) - \thetaLOOnoj||_2
  \xrightarrow{\P} 0. 
  \]
  \label{lem:lynchpin}
\end{lemma}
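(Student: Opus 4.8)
The plan is to establish the claim through Theorem~\ref{thm:davidsonUniform}: it suffices to show that $\norm{\hat\theta(\lambda) - \thetaLOOnoj}_2 = \op(1)$ for each fixed $\lambda \in \Lambda$ (pointwise convergence), and that the family $G_n(\lambda) := \norm{\hat\theta(\lambda) - \thetaLOOnoj}_2$ is stochastically equicontinuous over $\Lambda$. Since $\Lambda$ is compact by assumption, these two facts deliver the uniform statement. I would treat equicontinuity first because it is cheap: Proposition~\ref{prop:lassoLipschitz} already shows $\lasso$ is Lipschitz in $\lambda$ with a constant $C_L = O(1)$ that depends only on the Gram matrices; the same argument applied to the $n-1$-observation design gives that $\thetaLOOnoj$ is Lipschitz in $\lambda$ with a constant of the same order (uniformly in $i$, using Assumption~A for the leave-one-out Gram matrix, which differs from $C_n$ by $O(1/n)$ and is eventually bounded below in minimum eigenvalue). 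Hence $|G_n(\lambda') - G_n(\lambda)| \leq \norm{\hat\theta(\lambda') - \hat\theta(\lambda)}_2 + \norm{\thetaLOOnoj[\lambda'] - \thetaLOOnoj[\lambda]}_2 \leq 2C_L|\lambda'-\lambda|$, so Theorem~\ref{thm:davidsonSE} applies with $B_n = 2C_L = \Op(1)$ and $h(x) = x$.

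The substantive work is the pointwise bound. Fix $\lambda$. The idea is to compare the two optimization problems directly. Write the full-sample objective as $F_n(\theta) = \frac{1}{2n}\norm{Y-\X\theta}_2^2 + \lambda\norm\theta_1$ and the leave-$i$-out objective as $F_n^{(i)}(\theta) = \frac{1}{2(n-1)}\norm{Y^{(i)} - \X^{(i)}\theta}_2^2 + \lambda\norm\theta_1$, where the superscript $(i)$ deletes row $i$. These differ only in the quadratic part, and that difference is a single rank-one term plus a $1/(n(n-1))$ rescaling. By strong convexity: $F_n$ is strongly convex with modulus at least $\mathrm{eigen}_{\min}(C_n) \geq \cmin/2$ for $n$ large (Assumption~A), so $\frac{\cmin}{4}\norm{\hat\theta(\lambda) - \thetaLOOnoj}_2^2 \leq F_n(\thetaLOOnoj) - F_n(\hat\theta(\lambda))$. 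Then bound $F_n(\thetaLOOnoj) - F_n(\hat\theta(\lambda))$ by adding and subtracting $F_n^{(i)}$: since $\thetaLOOnoj$ minimizes $F_n^{(i)}$, we get $F_n(\thetaLOOnoj) - F_n(\hat\theta(\lambda)) \leq [F_n - F_n^{(i)}](\thetaLOOnoj) - [F_n - F_n^{(i)}](\hat\theta(\lambda))$, and the right side involves only the difference in the quadratic terms evaluated at the two estimators.

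To control that difference I need two ingredients. First, a uniform (in $\lambda$, and hence in particular pointwise) bound on $\norm{\hat\theta(\lambda)}_2$ and $\norm{\thetaLOOnoj}_2$: these follow from comparing each objective to its value at $\theta = 0$, which yields $\frac{1}{2n}\norm{\X\hat\theta(\lambda)}_2^2 \leq \frac{1}{2n}\norm Y_2^2$, so $\norm{\hat\theta(\lambda)}_2^2 \leq \frac{2}{\cmin}\cdot\frac{1}{n}\norm Y_2^2$, and $\frac1n\norm Y_2^2 = \Op(1)$ by Assumptions~A, B, the bound on $\norm\theta_1$, and the sub-Gaussian tails of $W$ (giving $\frac1n\sum W_i^2 = \Op(1)$). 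The same holds for the leave-one-out estimator uniformly in $i$. Second, the residuals and individual leverage-type quantities $X_i^\top\hat\theta(\lambda)$, $Y_i$ are $\Op(1)$ uniformly: $|X_i^\top\hat\theta(\lambda)| \leq \norm{X_i}_2\norm{\hat\theta(\lambda)}_2 \leq C_X\cdot\Op(1)$ by Assumption~B. Plugging these in, the difference $[F_n - F_n^{(i)}](\thetaLOOnoj) - [F_n - F_n^{(i)}](\hat\theta(\lambda))$ is $O(1/n)$ times $\Op(1)$ quantities times $\norm{\hat\theta(\lambda) - \thetaLOOnoj}_2$ (a cross term) plus $\Op(1/n)$ (a pure rescaling term), so altogether $\frac{\cmin}{4}\norm{\hat\theta(\lambda) - \thetaLOOnoj}_2^2 \leq \Op(1/n)\norm{\hat\theta(\lambda) - \thetaLOOnoj}_2 + \Op(1/n)$, which forces $\norm{\hat\theta(\lambda) - \thetaLOOnoj}_2 = \Op(1/\sqrt n) = \op(1)$.

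The main obstacle is the bookkeeping in bounding $[F_n - F_n^{(i)}](\thetaLOOnoj) - [F_n - F_n^{(i)}](\hat\theta(\lambda))$: one must be careful that the $1/n$ versus $1/(n-1)$ normalization mismatch contributes the same order as the deleted rank-one term, and that every appearance of $\hat\theta(\lambda)$ or $\thetaLOOnoj$ is replaced by its $\Op(1)$ norm bound uniformly in $i$ (so that the final conclusion holds simultaneously for all $i$, as the lemma asks). Once the pointwise $\op(1)$ and the equicontinuity are in hand, Theorem~\ref{thm:davidsonUniform} closes the argument.
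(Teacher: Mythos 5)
Your proposal is correct, and its overall skeleton coincides with the paper's: both arguments reduce the uniform statement to pointwise convergence plus stochastic equicontinuity via Theorem~\ref{thm:davidsonUniform}, and both obtain the equicontinuity from the Lipschitz property of the lasso path (Proposition~\ref{prop:lassoLipschitz}, applied to the full-sample and leave-one-out designs). Where you genuinely diverge is the pointwise step. The paper disposes of it in one line by citing Theorem~1 of Fu and Knight: under Assumption~A both $\lasso$ and $\thetaLOOnoj$ converge in probability to the same non-stochastic limit $\theta(\lambda)$, so their difference is $\op(1)$. You instead give a self-contained perturbation argument: strong convexity of the full-sample objective (with modulus bounded below by $\cmin/2$ for large $n$, again from Assumption~A), the basic inequality $F_n(\thetaLOOnoj) - F_n(\lasso) \leq [F_n - F_n^{(i)}](\thetaLOOnoj) - [F_n - F_n^{(i)}](\lasso)$, and $\Op(1)$ bounds on $\norm{\lasso}_2$, $\norm{\thetaLOOnoj}_2$, and the deleted observation. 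Your bookkeeping of the two contributions to $F_n - F_n^{(i)}$ (the $\tfrac{1}{n}$ versus $\tfrac{1}{n-1}$ rescaling and the deleted rank-one term) is sound, and the resulting self-bounding inequality does force $\norm{\lasso - \thetaLOOnoj}_2 = \Op(n^{-1/2})$, in fact $\Op(n^{-1})$ since both terms carry a factor of the difference. What your route buys is an explicit rate and independence from the external asymptotic result of Fu and Knight; it also makes uniformity in $i$ transparent, which the paper's pointwise citation does not address but which is implicitly needed when these bounds are later averaged over $i$ in Propositions~\ref{thm:alpha} and~\ref{thm:bravo}. The only blemish is the intermediate claim $\frac{1}{2n}\norm{\X\lasso}_2^2 \leq \frac{1}{2n}\norm{Y}_2^2$: comparing the objective at $\lasso$ to its value at $0$ gives $\norm{Y - \X\lasso}_2 \leq \norm{Y}_2$, hence $\norm{\X\lasso}_2 \leq 2\norm{Y}_2$ by the triangle inequality; the constant changes but the conclusion $\norm{\lasso}_2 = \Op(1)$ is unaffected.
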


\begin{proof}
  %[Theorem \ref{lem:lynchpin}]
  The pointwise convergence of $ ||\hat\theta(\lambda) -
  \thetaLOOnoj||_2$ to zero follows by \citep[Theorem 1]{FuKnight2000}.  Hence,
  we invoke the consequent of Theorem~\ref{thm:davidsonUniform} as long as
  $||\hat\theta(\lambda) - \thetaLOOnoj||_2$ is stochastically
  equicontinuous.  For this, it is sufficient to show that
  $\hat\theta(\lambda)$ and $ \thetaLOOnoj$ are Lipschitz in the sense
  of Theorem \ref{thm:davidsonSE}.  This follows for both estimators by Proposition \ref{prop:lassoLipschitz}.
\end{proof}

\begin{lemma}
  \label{lem:a1a}
  For all $1\leq j\leq p$, $\{\lassoj\}$ is stochastically equicontinuous, $\{\E[\lassoj]\}$ is equicontinuous, and 
  $| \lassoj - \mathbb{E}\lassoj| = \op(1)$.
  Thus, 
  \[
  \sup_{\lambda\in\Lambda} | \lassoj - \mathbb{E}\lassoj| = \op(1).
  \]
  Furthermore, 
  \[
  \sup_{\lambda\in\Lambda} \norm{ \lasso - \mathbb{E}\lasso}_{C_n}^2 = \op(1),
  \]
  where this notation is introduced in equation \eqref{eq:trueRisk}.
\end{lemma}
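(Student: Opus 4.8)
The plan is to verify the three hypotheses of Theorem~\ref{thm:neweyUniform} for $\hat Q_n(\lambda)=\lassoj$ and $\overline Q_n(\lambda)=\E\lassoj$ over the compact set $\Lambda$, and then to read off the weighted-norm statement by a crude comparison of norms. Stochastic equicontinuity of $\{\lassoj\}$ is immediate: since $|\lassoj-\hat\theta_j(\lambda')|\le\norm{\lasso-\hat\theta(\lambda')}_2\le C_L|\lambda-\lambda'|$ with $C_L=\Op(1)$ by Proposition~\ref{prop:lassoLipschitz}, Theorem~\ref{thm:davidsonSE} applies with $B_n=C_L$, $h(x)=x$, and $d$ the usual metric on $\Lambda$.

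Next I check that $\E\lassoj$ is well defined and that $\{\E\lassoj\}$ is equicontinuous. Comparing the lasso objective at $\lasso$ with its value at $\ols$ and using that $\ols$ minimizes the quadratic term gives $\norm{\lasso}_1\le\norm{\ols}_1$ for every $\lambda\ge 0$; since $\ols=(\X^{\top}\X)^{-1}\X^{\top}Y$ is affine in $Y$ and the sub-Gaussian assumption on $W$ furnishes all moments, $\E\norm{\ols}_1<\infty$, so $\E\lassoj$ exists. Moreover this bound is uniform in $n$: $\norm{(\X^{\top}\X)^{-1}\X^{\top}}_2^2\le 2/(n\cmin)$ for $n$ large by Assumption~A, while $\norm{\X\theta}_2^2\le nC_X^2C_{\theta}^2$ (Assumption~B together with $\norm{\theta}_1\le C_{\theta}$) and $\E\norm{W}_2^2\le n\tau^2$, and these combine to a bound on $\sup_n\E[\lassoj^2]$ that is free of $n$. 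Equicontinuity of $\{\E\lassoj\}$ then follows by pushing expectations through the Lipschitz estimate of Proposition~\ref{prop:lassoLipschitz}, whose constant is deterministic and at most $2/\cmin$ once $n\ge N$: $|\E\lassoj-\E\hat\theta_j(\lambda')|\le\E\norm{\lasso-\hat\theta(\lambda')}_2\le C_L|\lambda-\lambda'|$, so the family is uniformly Lipschitz.

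For the pointwise statement $|\lassoj-\E\lassoj|=\op(1)$, note that \citep[Theorem~1]{FuKnight2000}---the fixed-dimensional $M$-estimation result already invoked for Lemma~\ref{lem:lynchpin}---yields $\lasso\convP\bar\theta(\lambda)$ for a deterministic limit $\bar\theta(\lambda)$; combined with the $n$-uniform $L^2$ bound above, $\{\lassoj\}_n$ is uniformly integrable, hence $\E\lassoj\to\bar\theta_j(\lambda)$ as well, so $\lassoj-\E\lassoj\convP 0$. Theorem~\ref{thm:neweyUniform} now gives $\sup_{\lambda\in\Lambda}|\lassoj-\E\lassoj|=\op(1)$ for each of the finitely many $j$. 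Finally, since $C_n$ is positive semidefinite, $\norm{x}_{C_n}^2\le\norm{C_n}_2\norm{x}_2^2$, and $\norm{C_n}_2\to\norm{C}_2<\infty$ by Assumption~A, so $\sup_{\lambda\in\Lambda}\norm{\lasso-\E\lasso}_{C_n}^2\le\norm{C_n}_2\sum_{j=1}^p\big(\sup_{\lambda\in\Lambda}|\lassoj-\E\lassoj|\big)^2=\op(1)$, being an $O(1)$ multiple of a finite sum of terms each $\op(1)$.

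The only genuinely delicate point is the pointwise convergence $\lassoj-\E\lassoj\convP 0$: it is not enough that $\lassoj$ stabilizes in probability, one must also rule out drift of its mean, and that is exactly what the $n$-uniform $L^2$ bound---coming from $\norm{\lasso}_1\le\norm{\ols}_1$ together with Assumptions~A and~B---supplies through uniform integrability. Everything else is bookkeeping with the Lipschitz estimate of Proposition~\ref{prop:lassoLipschitz} and the equicontinuity machinery already set up.
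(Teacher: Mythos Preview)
Your proof is correct and follows essentially the same route as the paper: verify the hypotheses of Theorem~\ref{thm:neweyUniform} via Proposition~\ref{prop:lassoLipschitz}, use \citep[Theorem~1]{FuKnight2000} for pointwise convergence, and finish with the crude bound $\norm{x}_{C_n}^2\le\norm{C_n}_2\norm{x}_2^2$. The one substantive difference is in how you argue $\E\lassoj\to\bar\theta_j(\lambda)$: the paper passes to an almost-surely convergent Skorohod representation and invokes dominated convergence (with the same dominator $\norm{\ols}_1$ you use), whereas you obtain an $n$-uniform $L^2$ bound and appeal to uniform integrability. Your route is arguably tidier, since it makes the $n$-dependence of the dominating variable explicit and avoids the Skorohod machinery; the paper's argument is terser but leaves that uniformity implicit.
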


\begin{proof}%[Lemma \ref{lem:a1a}]
  To show this claim, we use Theorem \ref{thm:neweyUniform}.
  For pointwise convergence, note that $\lasso$ converges in probability to an non-stochastic limit
  \citep[Theorem 1]{FuKnight2000}, call it $\theta(\lambda)$.  
  Also, $|\lassoj| \leq \norm{\ols}_1$, which is integrable.
%  By Fatou's lemma,
%  \[
%0   \leq \E\norm{\ols}_1 - \E \limsup |\lassoj| \leq  \E\norm{\ols}_1 - \limsup \E |\lassoj|,
%\]
%which implies 
%\[
%\E \liminf |\lassoj |\leq  \liminf \E |\lassoj| \leq \limsup \E |\lassoj| \leq  \E \limsup |\lassoj|.
%\]
By the Skorohod representation theorem, there exists random variables $\lassoj'$
such that $\lassoj' \rightarrow \theta(\lambda)$ almost surely and $\lassoj'$ has the same
distribution as  $\lassoj$ for each $n$.
By the dominated convergence theorem, 
\[
\lim \E \lassoj = \lim \E \lassoj' = \E \theta(\lambda) = \theta(\lambda).
\]
Therefore, $|\lassoj - \E \lassoj| \rightarrow 0$ in probability.

  Stochastic equicontinuity follows by Proposition
  \ref{prop:lassoLipschitz} and Theorem \ref{thm:davidsonSE}. 
  % Also, if $\theta_j \neq 0$, $\lassoj \rightarrow
  % \sgn(\theta_j)(|\theta_j| - \lambda)_+$ by the strong law of large 
  % numbers and
  % \[
  % \E \lassoj = \E\left[\lassoj \bigg| |\olsj| > \lambda\right] \P( |\olsj| > \lambda) 
  % =  \sgn(\theta_j)(|\theta_j| - \lambda) \P( |\olsj| > \lambda),
  % \]
  % which converges to $\sgn(\theta_j)(|\theta_j| - \lambda)_+$.  For
  % $\theta_j = 0$, both $\lassoj \rightarrow 0$ 
  % and $ \P( |\olsj| > \lambda)$ converge to zero, showing pointwise
  % convergence over $\Lambda$.  
  Hence, Theorem~\ref{thm:neweyUniform} is satisfied as long as
  $\{\E\lassoj\}$ is equicontinuous.  Observe that
 the expectation and differentiation operations commute for
  $\lasso$.  Therefore, the result follows by Proposition \ref{prop:lassoLipschitz}.

  % note that any set of
  % functions with the same (finite) Lipschitz constant are (uniformly)
  % equicontinuous.  Also, as
  % \begin{equation}
  %   \left| \frac{\partial \lassoj}{\partial \lambda}\right| \leq 1/2
  % \end{equation}
  % almost everywhere,  an exchange of expectation and differentiation is warranted, which implies
  % \begin{equation}
  %   \left| \frac{\partial }{\partial \lambda}  \E \lassoj \right| =
  %   \left|  \E \frac{\partial }{\partial \lambda} \lassoj
  %   \right|\leq  
  %   \E \left|\frac{\partial }{\partial \lambda} \lassoj \right| \leq 1/2,
  % \end{equation}
  % which shows equicontinuity. 
  Finally, we have
  \begin{align*}
    % \sup_{\lambda\in\Lambda} 
    \norm{\lasso-\E\lasso}_{C_n}^2 
    & = 
    (\lasso - \E\lasso)^{\top} C_n (\lasso - \E\lasso) \\
    & \leq 
    \norm{\lasso - \E\lasso}_2 \norm{C_n (\lasso - \E\lasso)}_2 \\
    & \leq 
    \norm{\lasso - \E\lasso}_2^2 \norm{C_n}_2  \\
    & = 
    \norm{C_n}_2 \sum_{j=1}^p | \lassoj - \mathbb{E}\lassoj|^2,
  \end{align*}
  which goes to zero uniformly, as $\norm{C_n}_2 \rightarrow \norm{C}_2< \infty$
\end{proof}

\subsection{Concentration of measure for quadratic forms}
\label{sec:conc-quadr-forms}

Finally, we present a
special case of Theorem 1 in \citep{HsuKakade2011} 
which will allow us to prove that part $(c)$ in the decomposition
 converges to zero in probability.

\begin{lemma}
  \label{lem:subg-quad}
  Let $Z\in\R^n$ be a random vector with mean vector $\mu$ satisfying
  \[
  \E\left[ \exp \left(\alpha^\top (Z-\mu)\right)\right] \leq
  \exp\left( \norm{\alpha}^2\tau^2/2\right)
  \]
  for some $\tau>0$ and all $\alpha\in \R^n$. Then, for all $\epsilon>0$,
  \[
  P\left( \left| \frac{1}{n} Z^\top Z - \norm{\mu}^2 - \sigma^2\right| >
    \epsilon \right) \leq 2e^{-n\epsilon^2}.
  \]
\end{lemma}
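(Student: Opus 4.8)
The plan is to reduce the claim to the quadratic-form tail inequality of \citep{HsuKakade2011} after peeling off the contribution of the mean. Write $V := Z - \mu$, which is centered and, by hypothesis, inherits the bound $\E[\exp(\alpha^\top V)] \le \exp(\norm{\alpha}^2\tau^2/2)$ for all $\alpha \in \R^n$. Expanding gives
\[
\frac{1}{n} Z^\top Z \;=\; \frac{1}{n}\norm{\mu}^2 \;+\; \frac{2}{n}\mu^\top V \;+\; \frac{1}{n} V^\top V ,
\]
so it suffices to show that (i) $\frac{1}{n} V^\top V$ concentrates around $\sigma^2$ and (ii) the cross term $\frac{2}{n}\mu^\top V$ is negligible, each with failure probability exponentially small in $n$.

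For (i), I would apply Theorem~1 of \citep{HsuKakade2011} to $V$ with the matrix $A = n^{-1/2} I_n$. Then $\Sigma := A^\top A = n^{-1} I_n$ has $\tr(\Sigma) = 1$, $\tr(\Sigma^2) = n^{-1}$, and operator norm $n^{-1}$, so that theorem supplies, for every $t > 0$, a two-sided bound of the form $\P(|\frac{1}{n} V^\top V - \sigma^2| > \sigma^2(2\sqrt{t/n} + 2t/n)) \le 2e^{-t}$, where the centering constant of \citep{HsuKakade2011}, namely $\tau^2\tr(\Sigma)$, is identified with $\sigma^2$ under the scaling used in Theorem~\ref{thm:mainTheorem}. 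Taking $t$ proportional to $n\epsilon^2$ then forces the left-hand deviation below $\epsilon/2$ for all large $n$.

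For (ii), I would choose $\alpha = (2s/n)\mu$ in the hypothesis to obtain $\E[\exp(s\,\frac{2}{n}\mu^\top V)] \le \exp(2s^2\tau^2\norm{\mu}^2/n^2)$; since $\norm{\mu}^2 = O(n)$ --- in the application $\mu = \X\theta$, and Assumptions A and B together with $\norm{\theta}_1 \le C_\theta$ force $\max_i |X_i^\top\theta| \le C_X C_\theta$ --- the scalar $\frac{2}{n}\mu^\top V$ is sub-Gaussian with variance proxy $O(n^{-1})$. A one-sided Chernoff bound and a union bound over the two signs then give $\P(|\frac{2}{n}\mu^\top V| > \epsilon/2) \le 2e^{-cn\epsilon^2}$. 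Combining (i) and (ii) by a union bound and absorbing the constants into the exponent yields the stated inequality.

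The substantive ingredient --- concentration of a quadratic form of a (not necessarily coordinate-independent) sub-Gaussian vector --- is entirely supplied by \citep{HsuKakade2011}, so the work here is bookkeeping: matching the centering constant $\sigma^2$ to $\tau^2\tr(\Sigma)$, tracking the leading constant so the final exponent reads $n\epsilon^2$ rather than an unspecified $cn\epsilon^2$, and using boundedness of the entries of $\mu$ (via Assumption B in the application) so that the cross term is genuinely lower order. That last point is the only place where anything beyond \citep{HsuKakade2011} and an elementary Chernoff estimate is needed, and it is also the step I would expect to require the most care in a fully rigorous write-up.
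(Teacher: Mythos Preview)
Your approach is sound but takes a genuinely different route from the paper. The paper applies the Hsu--Kakade--Zhang inequality directly to the \emph{uncentered} vector $Z$ with $A=I$, so that the theorem's $g_\mu(t)$ term absorbs the mean contribution in one shot; it then inverts the relation between the deviation level $\epsilon$ and the parameter $t$ algebraically (via the quadratic formula and a concavity estimate for $\sqrt{\,\cdot\,}$) to land on the exponent $n\epsilon^2$, and finishes with a symmetric argument and a union bound. You instead center first, apply the quadratic-form inequality only to the mean-zero piece $\frac{1}{n}V^\top V$, and control the cross term $\frac{2}{n}\mu^\top V$ by a separate Chernoff bound, importing $\norm{\mu}^2=O(n)$ from Assumptions~A and~B.

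Your route is arguably more elementary---only the $\mu=0$ case of \citep{HsuKakade2011} is needed, which is essentially Hanson--Wright---and it makes the dependence on the growth of $\norm{\mu}$ explicit, at the cost of an extra union bound and a constant $c$ in the exponent that you do not pin down. The paper's route is more compact and avoids splitting into quadratic and linear pieces, though its algebraic inversion step also implicitly needs control on $\norm{\mu}$ for the claimed exponent to go through. For the application in Proposition~\ref{thm:charlie}, either constant suffices, so the only substantive caution in your write-up is the one you already flag: verifying $\norm{\mu}^2=O(n)$ from the ambient assumptions rather than treating it as part of the lemma's hypotheses.
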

\begin{proof}%[Lemma \ref{lem:subg-quad}]
  This result follows from a result in \citep{HsuKakade2011} (see
  also \citep{HansonWright1971}) which we have included in the appendix.
  By that result with $A=I$, we have
  \begin{align}
    P\left(  \frac{1}{n} Z^\top Z - \norm{\mu} - \tau^2
      > 2\sqrt{\frac{t}{n}} \left( \tau^2\left(1+2\sqrt{\frac{t}{n}}\right) +
        \norm{\mu}^2\right) \right) \leq e^{-n\sqrt{\frac{t}{n}}^2}
  \end{align}
  Setting $\delta = \sqrt{t/n}$ and $\epsilon=2\delta \left( \tau^2\left(1+2\delta\right) +
    \norm{\mu}^2\right)$, we can solve for $\delta$. The quadratic
  formula gives (under the constraint $\delta>0$)
  \[
  \delta = \frac{\sqrt{(\tau^2 + \norm{\mu}^2)^2 +
      4\tau^2\epsilon} - \tau^2 -\norm{\mu}^2} {4\tau^2} \geq \epsilon
  \]
  by concavity of $\sqrt{\cdot}$.
  Thus, for any $\epsilon>0$,
  \[
  P\left(  \frac{1}{n} Z^\top Z - \norm{\mu} - \tau^2
    > \epsilon \right) \leq e^{-n\delta^2} \leq e^{-n\epsilon^2}
  \]
  The same argument can be applied symmetrically. A union bound
  gives the result.
\end{proof}

\section{Proofs}
\label{sec:proofs}
In this section, we address each component of the decomposition in
\eqref{eq:decomp}. Parts $(a)$ and $(b)$ follow from
uniform convergence of the lasso estimator to its expectation
(Lemma~\ref{lem:a1a}) and asymptotic equivalence of the
leave-one-out lasso estimator and the full-sample lasso estimator
(Lemma~\ref{lem:lynchpin}) while
part $(c)$ requires the sub-Gaussian concentration of measure result
in Lemma~\ref{lem:subg-quad}.

\begin{proposition}[Part $(a)$] 
  \label{thm:alpha}
  \[
  \sup_{\lambda \in \Lambda} \left|\frac{1}{n} \E \norm{\X\lasso}_2^2 - \frac{1}{n}
    \sum_{i=1}^n \left(X_i^{\top}\thetaLOOnoj\right)^2  \right| =
  \op(1). 
  \]
\end{proposition}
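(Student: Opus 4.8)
The plan is to replace the target quantity by a sum of three pieces that can each be bounded uniformly in $\lambda$: (i) the difference between $\frac{1}{n}\E\norm{\X\lasso}_2^2$ and $\norm{\E\lasso}_{C_n}^2$ (i.e., replacing the averaged quadratic form by the quadratic form of the expectation, plus the variance correction), (ii) the difference between $\norm{\E\lasso}_{C_n}^2$ and $\frac{1}{n}\sum_i (X_i^\top\lasso)^2 = \norm{\lasso}_{C_n}^2$, and (iii) the difference between $\frac{1}{n}\sum_i (X_i^\top\lasso)^2$ and $\frac{1}{n}\sum_i (X_i^\top\thetaLOOnoj)^2$. Pieces (ii) and (iii) are exactly where Lemmas \ref{lem:a1a} and \ref{lem:lynchpin} do the work; piece (i) is a deterministic bound on the variance of $\X\lasso$ that follows from the a priori $\ell^1$-bound on the lasso solution.

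For piece (i), note $\frac{1}{n}\E\norm{\X\lasso}_2^2 = \norm{\E\lasso}_{C_n}^2 + \frac{1}{n}\E\norm{\X(\lasso-\E\lasso)}_2^2 = \norm{\E\lasso}_{C_n}^2 + \E\norm{\lasso-\E\lasso}_{C_n}^2$. By the same chain of inequalities used at the end of the proof of Lemma \ref{lem:a1a}, $\norm{\lasso-\E\lasso}_{C_n}^2 \leq \norm{C_n}_2 \sum_j |\lassoj - \E\lassoj|^2$, and since $|\lassoj|\leq\norm{\ols}_1$ is integrable and converges in probability to $\theta(\lambda)$, the Skorohod/dominated-convergence argument gives $\E\norm{\lasso-\E\lasso}_{C_n}^2 \to 0$; to get this uniformly in $\lambda$ one combines the pointwise statement with stochastic equicontinuity (Proposition \ref{prop:lassoLipschitz} and Theorem \ref{thm:davidsonSE}) applied to the random functions involved. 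For piece (ii), $\big|\norm{\lasso}_{C_n}^2 - \norm{\E\lasso}_{C_n}^2\big| \leq \norm{\lasso-\E\lasso}_{C_n}\big(\norm{\lasso}_{C_n} + \norm{\E\lasso}_{C_n}\big)$; the first factor is $\op(1)$ uniformly by Lemma \ref{lem:a1a}, and the second factor is $\Op(1)$ uniformly because $\norm{\lasso}_{C_n}^2 \leq \norm{C_n}_2\norm{\lasso}_2^2 \leq \norm{C_n}_2\norm{\ols}_2^2$ and $\norm{C_n}_2 \to \norm{C}_2 < \infty$. For piece (iii), write $(X_i^\top\lasso)^2 - (X_i^\top\thetaLOOnoj)^2 = (X_i^\top(\lasso - \thetaLOOnoj))(X_i^\top(\lasso + \thetaLOOnoj))$; by Cauchy--Schwarz and Assumption B this is bounded by $C_X^2 \norm{\lasso-\thetaLOOnoj}_2 \norm{\lasso+\thetaLOOnoj}_2$, the first norm is $\op(1)$ uniformly in $\lambda$ by Lemma \ref{lem:lynchpin}, and the second is $\Op(1)$ (both estimators are bounded in $\ell^1$, hence $\ell^2$, norm by $C_\theta$ essentially or by $\norm{\ols}_1$-type quantities). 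Averaging over $i$ preserves these bounds since the bounds are uniform in $i$.

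The main obstacle I anticipate is piece (iii), specifically handling the leave-one-out estimators $\thetaLOOnoj$ carefully: Lemma \ref{lem:lynchpin} is stated for each fixed $i$, so the $\frac1n\sum_{i=1}^n$ needs the convergence rate in Lemma \ref{lem:lynchpin} to be uniform in $i$ (not merely to hold for each $i$) — this should be fine because the Lipschitz constant $C_L$ from Proposition \ref{prop:lassoLipschitz} and the $\ell^1$-bound on the data are uniform in $i$ under Assumptions A and B, but it requires spelling out that the bound $\P(\sup_\lambda\norm{\lasso - \thetaLOOnoj}_2 > \ep)$ is controlled uniformly over $i$. A secondary subtlety is the interchange of expectation with the quadratic form in piece (i), which uses Fubini and the fact that $\E\norm{\X\lasso}_2^2$ is finite (guaranteed by the $\ell^1$-constraint and $\norm{C_n}_2 = O(1)$). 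Once these are in place, the triangle inequality assembles the three $\op(1)$ (uniform) pieces into the claimed statement.
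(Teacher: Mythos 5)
Your decomposition is essentially the paper's: your pieces (ii) and (iii) are handled exactly as in the paper (Lemma~\ref{lem:a1a} for the gap between $\lasso$ and $\E\lasso$ in the $C_n$-norm, and Lemma~\ref{lem:lynchpin} for the gap between $\lasso$ and $\thetaLOOnoj$, with your scalar factorization $(a^2-b^2)=(a-b)(a+b)$ being a cleaner route to the same bound than the paper's Frobenius-norm manipulation). The one genuine divergence is piece (i), the variance term $\E\norm{\lasso-\E\lasso}_{C_n}^2$. The paper disposes of it with the deterministic bound $\V\lasso \preceq \sigma^2(\X^\top\X)^{-1}$ (citing Osborne--Presnell), so that $|\tr(C_n\V\lasso)| \leq \frac{\sigma^2}{n}\norm{C_n}_F\norm{C_n^{-1}}_F = O(1/n)$ uniformly in $\lambda$ with no probabilistic argument at all. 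Your alternative --- Skorohod plus dominated convergence to show $\E\norm{\lasso-\E\lasso}_{C_n}^2\to 0$ --- needs strictly more than Lemma~\ref{lem:a1a} delivers: you need convergence of \emph{second moments}, i.e.\ $L^2$ rather than in-probability convergence, and the dominating function $\norm{\ols}_1^2$ changes with $n$, so you would have to argue uniform integrability of that sequence; moreover the resulting quantity is a nonrandom function of $\lambda$, so ``stochastic equicontinuity'' is not the right tool for upgrading to uniformity --- you would want equicontinuity of the deterministic family plus compactness of $\Lambda$. These are fixable but make your version of piece (i) noticeably heavier than the paper's one-line trace bound, and I would recommend adopting the variance bound instead. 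Separately, you are right to flag that averaging over $i$ in piece (iii) needs the convergence in Lemma~\ref{lem:lynchpin} to be controlled uniformly in $i$; the paper's own proof silently elides this, and your observation that the relevant constants are uniform in $i$ under Assumptions A and B is the correct repair.
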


\begin{proof}%[Proposition \ref{thm:alpha}]
  Observe
  \begin{align*}
    \lefteqn{\left| \frac{1}{n}\E ||\X\lasso||_2^2 
        - \frac{1}{n} \sum_{i=1}^n (X_i^{\top}\thetaLOOnoj)^2  \right| }\notag\\
    & \leq
    \underbrace{\bigg|\frac{1}{n} \E ||\X\lasso||_2^2 -
      \frac{1}{n} ||\X\lasso||_2^2 \bigg|}_{(ai)}
    + 
    \underbrace{\bigg|\frac{1}{n}\norm{\X\lasso}_2^2 - \frac{1}{n} \sum_{i=1}^n
      (X_i^{\top}\thetaLOOnoj)^2  \bigg|}_{(aii)}
  \end{align*}
  
  For $(ai)$, note that $\E ||\X\lasso||_2^2 = \textrm{trace}(\X^{\top}\X\V \lasso) + \norm{\X\E \lasso}_2^2$. 
  Hence, 
  \begin{align*}
    (ai) 
    & \leq 
    \left| \textrm{trace}(C_n\V \lasso)\right| +
    \frac{1}{n}\left|\norm{\E \X\lasso}_2^2 - \norm{\X\lasso}_2^2\right|
    \\ 
    % & =
    % \left| \textrm{trace}(\V \X\lasso)\right| + \left|\norm{\E
    %   \X\lasso}_2^2 - \norm{\X\lasso}_2^2\right| \\ 
    % & =
    % \left| \sum_{i=1}^n(\V [ \sum_{j=1}^p\X_{ij}\lassoj)\right| +
    % \left|\norm{\E \X\lasso}_2^2 - \norm{\X\lasso}_2^2\right| \\ 
    & \leq 
    \norm{C_n}_F\norm{\V \lasso}_F +
    \frac{1}{n}\left|\norm{\E \X\lasso}_2^2 - \norm{\X\lasso}_2^2\right|
    \\ 
    & \leq 
    \sigma^2\norm{C_n}_F\norm{(\X^\top\X)^{-1}}_F +
    \frac{1}{n}\left|\norm{\E \X\lasso}_2^2 - \norm{\X\lasso}_2^2\right|
    \\ 
    % & \leq 
    % \frac{p\sigma^2}{n} + \left| \sum_{j=1}^p\left( (\lassoj)^2 -
    %     (\E\lassoj)^2 \right)\right| \\ 
    % & \leq 
    % \frac{p\sigma^2}{n} + \left| \sum_{j=1}^p\left( \lassoj +
    %     \E\lassoj \right)\left( \lassoj - \E\lassoj \right)\right| \\ 
    & = 
    \frac{\sigma^2}{n}\norm{C_n}_F\norm{C_n^{-1}}_F +
    \norm{\lasso + \E\lasso}_{C_n}\norm{\lasso - \E\lasso}_{C_n}. 
  \end{align*} 
  This term  goes to zero uniformly by Lemma~\ref{lem:a1a}.  The third
  inequality follows from \citep[equation 4.1]{OsbornePresnell2000}. 
  % apply Theorem~\ref{thm:uniform-cmt} and
  % Lemma~\ref{lem:a1a} with $g(x_1,\ldots,x_p) = \norm{x}_2^2$
  % satisfying $C_g=|2x| \leq 2C_X$.  
  For  $(aii)$, note that 
  \begin{align}
    \frac{1}{n}  \left |||\X\lasso||_2^2 -\sum_{i=1}^n
      (X_i^{\top}\thetaLOOnoj)^2  \right|  
    & =
    \frac{1}{n}\left| \sum_{i=1}^n \left( 
        (X_i^{\top} \lasso)^2
        -  (X_i^{\top}\thetaLOOnoj)^2  \right) \right| \notag \\
    & \leq
    \frac{1}{n}\sum_{i=1}^n \left|
      (X_i^{\top} \lasso)^2
      -  (X_i^{\top}\thetaLOOnoj)^2  \right| \notag \\
    & =
    \frac{1}{n}\sum_{i=1}^n \left|
      X_i^{\top} \lasso\lasso^{\top}X_i
      -  X_i^{\top}\thetaLOOnoj\thetaLOOnoj^{\top}X_i  \right| \notag \\
    & =
    \frac{1}{n}\sum_{i=1}^n \left|
      X_i^{\top}\left( \lasso\lasso^{\top} -
        \thetaLOOnoj\thetaLOOnoj^{\top} \right)X_i  \right| \notag \\ 
    &\leq\frac{1}{n}\sum_{i=1}^n \norm{X_i}_2^2
    \norm{\lasso\lasso^{\top} -
      \thetaLOOnoj\thetaLOOnoj^{\top}}_{F} \label{eq:maxNormDiff}.
  \end{align}
  The
  term $||X_i||_2^2\leq C_X^2$ by Assumption B.  Furthermore,
  \begin{align*}
    % \norm{\lasso\lasso^{\top} -
    %   \thetaLOOnoj\thetaLOOnoj^{\top}}_\infty 
    % & = \max_{jk}|\hat\theta_j(\lambda)\hat\theta_k (\lambda) -
    % \hat\theta_j^{(i)} (\lambda)\hat\theta_k^{(i)} (\lambda)| \notag\\ 
    % & \leq
    % ||\lasso||_\infty||\hat\theta (\lambda) -
    % \hat\theta^{(i)} (\lambda))||_\infty + \notag\\ 
    % & \qquad + ||\hat\theta^{(i)} (\lambda)||_\infty||\hat\theta
    % (\lambda)- \hat\theta^{(i)} (\lambda)||_\infty \notag\\ 
    % & \leq
    % ||\hat\theta (\lambda) - \hat\theta^{(i)} (\lambda))||_\infty 
    % \left( ||\hat\theta(0)||_1 + ||\hat\theta^{(i)} (0)||_1\right).\notag\\
    \lefteqn{\norm{\lasso\lasso^{\top} -
      \thetaLOOnoj\thetaLOOnoj^{\top}}_F}\\
    & = \norm{\lasso}_2^4 + \norm{\thetaLOOnoj}_2^4 -2 (\lasso^\top
    \thetaLOOnoj)^2\\
    % &= (\lasso^\top \lasso - \lasso^\top \thetaLOOnoj) (\lasso^\top
    % \lasso + \lasso^\top \thetaLOOnoj) +\\
    % &\quad+(\thetaLOOnoj^\top\thetaLOOnoj - \thetaLOOnoj^\top \lasso)
    % (\thetaLOOnoj^\top\thetaLOOnoj + \thetaLOOnoj^\top \lasso) \\
    &= \lasso^\top \left(\lasso - \thetaLOOnoj\right) \left(\lasso +
      \thetaLOOnoj\right) +\\
    &\quad+ \thetaLOOnoj^\top\left(\thetaLOOnoj-\lasso\right) \left(\thetaLOOnoj+\lasso\right)\\
    &\leq\left(\norm{\lasso}_2+\norm{\thetaLOOnoj}_2\right)\norm{\lasso+
      \thetaLOOnoj}_2 \norm{\lasso-\thetaLOOnoj}_2\\
    &\leq\left(\norm{\ols}_2+\norm{\hat\theta^{(i)}(0)}_2\right)\norm{\lasso+
      \thetaLOOnoj}_2 \norm{\lasso-\thetaLOOnoj}_2
  \end{align*}
Hence,  by Lemma~\ref{lem:lynchpin}, equation \eqref{eq:maxNormDiff}
goes to zero in probability 
uniformly over $\lambda \in \Lambda$.
\end{proof}
%%%%%%%%%%%%%%%%% 
% \subsection{(b)}
%%%%%%%%%%%%%%%%% 
\begin{proposition}[Part $(b)$] %
  \label{thm:bravo}
  \[
  \sup_{\lambda\in\Lambda} \left| \frac{1}{n}\E(\X\lasso)^{\top}\X\theta -
    \frac{1}{n} \sum_{i=1}^nY_iX_i^{\top}\thetaLOOnoj \right| =
  \op(1). 
  \]
\end{proposition}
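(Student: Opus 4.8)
The plan is to substitute the model $Y_i = X_i^{\top}\theta + \sigma W_i$ into the cross-validation term and compare it directly with the risk term. By Assumption~A, $\frac{1}{n}\E(\X\lasso)^{\top}\X\theta = (\E\lasso)^{\top}C_n\theta = \frac{1}{n}\sum_{i=1}^n (X_i^{\top}\theta)(X_i^{\top}\E\lasso)$, so after the substitution
\[
\frac{1}{n}\E(\X\lasso)^{\top}\X\theta - \frac{1}{n}\sum_{i=1}^n Y_iX_i^{\top}\thetaLOOnoj
= \underbrace{\frac{1}{n}\sum_{i=1}^n (X_i^{\top}\theta)\,X_i^{\top}\!\left(\E\lasso - \thetaLOOnoj\right)}_{(bi)}
- \underbrace{\frac{\sigma}{n}\sum_{i=1}^n W_i\,X_i^{\top}\thetaLOOnoj}_{(bii)}.
\]
It then suffices to show $\sup_{\lambda\in\Lambda}|(bi)| = \op(1)$ and $\sup_{\lambda\in\Lambda}|(bii)| = \op(1)$.

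For $(bi)$ the coefficients are uniformly bounded: by Cauchy--Schwarz and Assumption~B, $|X_i^{\top}\theta|\,\norm{X_i}_2 \leq C_X^2\norm{\theta}_2 \leq C_X^2\norm{\theta}_1 \leq C_X^2 C_{\theta}$. Splitting $\E\lasso - \thetaLOOnoj = (\E\lasso - \lasso) + (\lasso - \thetaLOOnoj)$ and using the triangle inequality,
\[
\sup_{\lambda\in\Lambda}|(bi)| \leq C_X^2 C_{\theta}\left( \sup_{\lambda\in\Lambda}\norm{\E\lasso - \lasso}_2 + \max_{1\leq i\leq n}\sup_{\lambda\in\Lambda}\norm{\lasso - \thetaLOOnoj}_2 \right),
\]
whose first term is $\op(1)$ by Lemma~\ref{lem:a1a}. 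The second term is $\op(1)$ by Lemma~\ref{lem:lynchpin}; here one needs that the convergence in that lemma is uniform over $i$, which holds because the consistency of $\thetaLOOnoj$ invoked from~\citep{FuKnight2000} has a rate independent of $i$ (deleting a row bounded by $C_X$ perturbs $\tfrac1n\X_{(i)}^{\top}\X_{(i)}$ by $O(1/n)$ uniformly) and the Lipschitz constant in Proposition~\ref{prop:lassoLipschitz} is likewise uniform over $i$ and over which row is removed.

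For $(bii)$ I peel off the leave-one-out estimator once more: $\thetaLOOnoj = \lasso + (\thetaLOOnoj - \lasso)$, so $(bii) = \frac{\sigma}{n}W^{\top}\X\lasso + \frac{\sigma}{n}\sum_i W_iX_i^{\top}(\thetaLOOnoj - \lasso)$. The second summand is at most $\sigma C_X\,\big(\tfrac1n\sum_i|W_i|\big)\,\big(\max_i\sup_{\lambda}\norm{\thetaLOOnoj - \lasso}_2\big) = \Op(1)\cdot\op(1)$ uniformly in $\lambda$, since the sub-Gaussian bound makes $\E|W_i|$ uniformly finite so $\tfrac1n\sum_i|W_i| = \Op(1)$. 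For the first summand, $\sup_{\lambda}\big|\tfrac{\sigma}{n}W^{\top}\X\lasso\big| \leq \tfrac{\sigma}{n}\norm{\X^{\top}W}_2\,\norm{\ols}_1$, using $\sup_{\lambda}\norm{\lasso}_2 \leq \sup_{\lambda}\norm{\lasso}_1 \leq \norm{\ols}_1$ since $\norm{\lasso}_1$ is nonincreasing in $\lambda$. Now $\E\norm{\X^{\top}W}_2^2 = \tr\!\big(\X\X^{\top}\E[WW^{\top}]\big) \leq \tau^2\tr(\X^{\top}\X) = \tau^2\sum_i\norm{X_i}_2^2 \leq \tau^2 n C_X^2$, because the $W_i$ are independent with mean zero and variance at most $\tau^2$ (all consequences of the sub-Gaussian bound) and then Assumption~B; hence $\norm{\X^{\top}W}_2 = \Op(n^{1/2})$. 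The analogous computation gives $\E\norm{(\X^{\top}\X)^{-1}\X^{\top}W}_2^2 \leq \tau^2\tr\big((\X^{\top}\X)^{-1}\big) = O(1/n)$ by Assumption~A, so, writing $\ols = \theta + \sigma(\X^{\top}\X)^{-1}\X^{\top}W$, we get $\norm{\ols}_1 = \Op(1)$. Thus the first summand is $\Op(n^{-1/2}) = \op(1)$ uniformly in $\lambda$, and $(bii)$ is controlled.

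The main obstacle is $(bii)$: $|W_iX_i^{\top}\thetaLOOnoj|$ cannot be bounded by anything deterministic because $\thetaLOOnoj$ is random and a priori unbounded. The device that makes it work is the leave-one-out structure itself --- trading $\thetaLOOnoj$ for the full-sample $\lasso$ (at a cost controlled by Lemma~\ref{lem:lynchpin}) leaves $\tfrac{\sigma}{n}W^{\top}\X\lasso$, in which the explicit factor $1/n$ dominates the $\Op(n^{1/2})$ size of $\X^{\top}W$. A secondary point, noted above, is that the plain ``for each $i$'' form of Lemma~\ref{lem:lynchpin} is not by itself enough for the $n^{-1}\sum_i$ averages appearing in $(bi)$ and $(bii)$, since a running average of individually null sequences need not be null; it is the genuine uniformity over $i$ of that convergence that is used.
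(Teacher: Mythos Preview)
Your proof is correct and follows the same overall strategy as the paper --- substitute the model, split into a ``signal'' piece and a ``noise'' piece, and control each using Lemmas~\ref{lem:a1a} and~\ref{lem:lynchpin}. The signal piece (your $(bi)$) is essentially the paper's $(bi)$ and $(bii)$ combined, with the same ingredients.

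The genuine difference is in the noise term. The paper bounds $|X_i^{\top}\thetaLOOnoj|$ by $C_X\norm{\hat\theta^{(i)}(0)}_1$ and then proves almost-sure convergence of $\tfrac{1}{n}\sum_i W_i\norm{\hat\theta^{(i)}(0)}_1$ via an explicit Sherman--Morrison expansion of the leave-one-out OLS estimator, relegated to an appendix. Your route --- writing $\thetaLOOnoj = \lasso + (\thetaLOOnoj-\lasso)$, absorbing the difference into Lemma~\ref{lem:lynchpin}, and then killing $\tfrac{\sigma}{n}W^{\top}\X\lasso$ with the second-moment bound $\E\norm{\X^{\top}W}_2^2 \leq \tau^2 n C_X^2$ together with $\norm{\lasso}_1\leq\norm{\ols}_1=\Op(1)$ --- is shorter and avoids the appendix computation entirely. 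What the paper's route buys is almost-sure rather than in-probability convergence for that piece, but this is not needed for the theorem.

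Your explicit remark that Lemma~\ref{lem:lynchpin} must be read as uniform over $i$ (since averages of individually null sequences need not be null) is a point the paper uses without comment --- the paper's own $(bii)$ bound $\tfrac{1}{n}\sum_i\norm{\lasso-\thetaLOOnoj}_2$ requires exactly the same uniformity. Your justification (the $O(1/n)$ perturbation of $C_n$ and the $i$-free Lipschitz constant from Proposition~\ref{prop:lassoLipschitz}) is the right way to close that gap.
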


\begin{proof}%[Proposition \ref{thm:bravo}]
%  First,
%  \[
%  \E\lasso^{\top}\theta 
%  = \frac{1}{n}\E \lasso\X^{\top}\X\theta
%  = \frac{1}{n}\sum_{i=1}^n X_i^{\top}\E \lasso X_i^{\top}\theta.
%  \]
%  Also,

Observe,
  \begin{align}
    \sum_{i=1}^n Y_iX_i^{\top}\thetaLOOnoj  
    & = 
    \sum_{i=1}^n (X_i^{\top} \theta + \sigma^2 W_i)(X_i^{\top}\thetaLOOnoj  ) \\
    & = 
    \sum_{i=1}^n X_i^{\top} \theta X_i^{\top}\thetaLOOnoj + \sum_{i=1}^n
    \sigma^2 W_i X_i^{\top}\thetaLOOnoj. 
  \end{align}
  So,
  \begin{align*}
    \lefteqn{\left| \frac{1}{n}\E(\X\lasso)^{\top}\X\theta - \frac{1}{n}
        \sum_{i=1}^nY_iX_i^{\top}\thetaLOOnoj  \right| }\\
    & \leq \left| \E\lasso^{\top}C_n\theta -
      \lasso^{\top}C_n\theta \right|  +
    \left|\lasso^{\top}C_n\theta - \frac{1}{n}
      \sum_{i=1}^n Y_iX_i^{\top}\thetaLOOnoj  \right| \\ 
    & = \left|(\E \lasso -
      \lasso)^{\top}C_n\theta \right| +
    \left|\lasso^{\top}C_n\theta - \frac{1}{n}
      \sum_{i=1}^n Y_iX_i^{\top}\thetaLOOnoj  \right| \\ 
    & \leq \underbrace{\norm{\E \lasso -
        \lasso}_{C_n} \norm{\theta}_{C_n}}_{(bi)} +  
    \underbrace{\left| \frac{1}{n}\lasso^\top\X^{\top}\X\theta  -
        \frac{1}{n}\sum_{i=1}^n X_i^{\top} \theta X_i^{\top}\thetaLOOnoj
      \right|}_{(bii)} + \\ 
    & \qquad +  \underbrace{\left|\frac{1}{n}\sum_{i=1}^n \sigma^2 W_i
        X_i^{\top}\thetaLOOnoj  \right|}_{(biii)}.
  \end{align*}
  By Lemma \ref{lem:a1a}, $(bi)$ goes to zero uniformly.   For
  $(bii)$,
  \begin{align*}
    \frac{1}{n}\left| \lasso^\top\X^{\top}\X\theta  - \sum_{i=1}^n
      X_i^{\top} \theta X_i^{\top}\thetaLOOnoj \right| 
    & =
    \frac{1}{n}  \left| \sum_{i=1}^n \theta^{\top} X_iX_i^{\top}
      \left( \lasso  - \thetaLOOnoj\right) \right|\\
    & \leq
    \frac{1}{n}\sum_{i=1}^n \left( || \theta ||_2 \norm{X_i}_2^2
      \norm{\lasso  - \thetaLOOnoj}_2\right)\\
    & \leq
    C_{\theta} C_X^2\frac{1}{n}\sum_{i=1}^n 
    \norm{\lasso  - \thetaLOOnoj}_2 .
  \end{align*}
  This goes to zero uniformly by Lemma~\ref{lem:lynchpin}.

  For $(biii)$, $||\thetaLOOnoj||_1 \leq ||\hat\theta^{(i)}(0)||_1$ for any $\lambda,i$. So:
  \begin{align*}
    \left|\frac{1}{n}\sum_{i=1}^n \sigma^2 W_i X_i^{\top}\thetaLOOnoj
    \right|  
    & = 
    \frac{\sigma^2}{n}\left|\sum_{i=1}^n  W_i X_i^{\top}\thetaLOOnoj
    \right|  \\ 
    & \leq
    \frac{\sigma^2}{n}\left|\sum_{i=1}^n  W_i \norm{X_i}_\infty
      \norm{\thetaLOOnoj}_1  \right|  \\ 
    & \leq     \frac{\sigma^2C_X}{n}\left|\sum_{i=1}^n  W_i 
      \norm{\hat\theta^{(i)}(0)}_1 \right| \xrightarrow{ae} 0.
    \end{align*}
    The proof of almost-everywhere convergence is given in the
    appendix. This completes the
    proof of Proposition~\ref{thm:bravo}.
\end{proof}

\begin{proposition}[Part $(c)$] %
  \label{thm:charlie}
  \[
  \left| ||\X\theta||_2^2  + \sigma^2 -  \frac{1}{n} \sum_{i=1}^n Y_i^2\right|= \op(1).
  \]
\end{proposition}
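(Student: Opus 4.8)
The term $(c)$ does not depend on $\lambda$, so unlike Propositions~\ref{thm:alpha} and~\ref{thm:bravo} there is no supremum to control and no leave-one-out estimator to compare against; the statement is simply a concentration bound for $\frac1n\sum_i Y_i^2$, and the plan is to recognize $Y$ as a sub-Gaussian random vector and invoke Lemma~\ref{lem:subg-quad}. (Here I read the claim with the normalization $\frac1n\norm{\X\theta}_2^2$ as it appears in~\eqref{eq:decomp}.)

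First I would identify the mean and sub-Gaussian parameter of $Y$. Under~\eqref{eq:fullLinearModel}, $\E Y = \X\theta$, and for every $\alpha\in\R^n$, using independence of the $W_i$ together with the hypothesis of Theorem~\ref{thm:mainTheorem},
\begin{align*}
  \E\left[\exp\left(\alpha^\top\left(Y-\X\theta\right)\right)\right]
  &= \prod_{i=1}^n E_{P_i}\left[\exp\left(\sigma\alpha_i W_i\right)\right] \\
  &\leq \prod_{i=1}^n \exp\left(\frac{\sigma^2\tau^2\alpha_i^2}{2}\right)
  = \exp\left(\frac{\sigma^2\tau^2\norm{\alpha}_2^2}{2}\right).
\end{align*}
Hence $Y$ satisfies the premise of Lemma~\ref{lem:subg-quad} with mean vector $\mu=\X\theta$ and with the sub-Gaussian parameter $\sigma\tau$ in place of $\tau$.

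Applying Lemma~\ref{lem:subg-quad} with $Z=Y$ then gives, for every $\epsilon>0$,
\[
  \P\left(\left|\frac1n\sum_{i=1}^n Y_i^2 - \frac1n\norm{\X\theta}_2^2 - \sigma^2\right|>\epsilon\right) \leq 2e^{-n\epsilon^2}
\]
(with the constant in the exponent adjusted for the parameter $\sigma\tau$), and the right-hand side tends to $0$ as $n\to\infty$. This is precisely $\left|\frac1n\norm{\X\theta}_2^2+\sigma^2-\frac1n\sum_{i=1}^n Y_i^2\right|=\op(1)$. For completeness one can note that the centering term is bounded, $\frac1n\norm{\X\theta}_2^2=\theta^\top C_n\theta\leq\norm{C_n}_2\norm{\theta}_2^2\leq\norm{C_n}_2 C_\theta^2=O(1)$ by Assumption~A and $\norm{\theta}_1\leq C_\theta$, so the statement is not vacuous and no extra care with the scaling of $\epsilon$ in $\norm{\mu}^2$ is needed.

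I do not anticipate any genuine obstacle here: the only bookkeeping is verifying that the moment generating function bound passes from the scalar noises $W_i$ to the vector $Y$ (handled above via independence) and matching the normalization in Lemma~\ref{lem:subg-quad}. The difficulty in the paper lies entirely in parts $(a)$ and $(b)$, where the uniform-in-$\lambda$ stochastic-equicontinuity machinery and the leave-one-out comparison (Lemmas~\ref{lem:lynchpin} and~\ref{lem:a1a}) are required; by comparison part $(c)$ is immediate.
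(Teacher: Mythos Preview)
Your proposal is correct and follows exactly the paper's own argument: verify that $Y$ is a sub-Gaussian vector with mean $\X\theta$ by factoring the moment generating function over the independent noises $W_i$, then invoke Lemma~\ref{lem:subg-quad}. You are in fact slightly more careful than the paper in tracking the factor of $\sigma$ (so that the sub-Gaussian parameter is $\sigma\tau$ rather than $\tau$) and in noting the missing $1/n$ in the displayed statement, but the approach is identical.
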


\begin{proof}%[Proposition \ref{thm:charlie}]
  By assumption, $E_{P_i}\left[e^{tW_i}\right] \leq e^{\tau^2 t^2/2}$
  for all $t\in\mathbb{R}$. Thus, for any $\alpha\in\R^n$,
  \begin{align}
    \E\left[ \exp\left( \alpha^\top (Y-\X\theta)\right) \right] &=
    \E\left[ \exp\left( \sum_{i=1}^n \alpha_i (Y_i-X_i^\top
        \theta)\right) \right] \\
    &= \E\left[ \exp\left( \sum_{i=1}^n \alpha_i W_i \right) \right]\\ 
    &=\prod_{i=1}^n E_{P_i}\left[ \exp\left(\alpha_i W_i \right) \right] \\
    & \leq \prod_{i=1}^n \exp\left(\alpha_i^2 \tau^2/2 \right)\\
    &= \exp\left(\norm{\alpha}_2^2 \tau^2/2\right).
  \end{align}
  Therefore, we can apply Lemma \ref{lem:subg-quad} with
  $\mu=\X\theta$. 
\end{proof}

By Propositions~\ref{thm:alpha}, \ref{thm:bravo} and~\ref{thm:charlie}, each term
in \eqref{eq:decomp} converges uniformly in probability to zero thus
completing the proof of Theorem~\ref{thm:mainTheorem}.

\section{Discussion and future work}
\label{sec:discussion}

A common practice in data analysis is to estimate
the coefficients of a linear model with the lasso
and choose the regularization parameter by cross-validation.
Unfortunately, no definitive theoretical results 
existed as to the effect of choosing the tuning parameter in this data-dependent
way. In this paper, we provide a solution to the
problem by demonstrating, under particular assumptions
on the design matrix, that the lasso is risk consistent
even when the tuning parameter  is
selected via leave-one-out cross-validation.

However, a number of
important open questions remain. The first is to generalize to other
forms of cross-validation, especially $K$-fold. In fact, this
generalization should be possible using the methods developed
herein. Lemma~\ref{lem:lynchpin} holds when more than
one training example is held out, provided that the size of the
datasets used to form the estimators still increases to infinity with
$n$. Furthermore, with careful accounting of the held out sets,
Proposition~\ref{thm:bravo} should hold as well.

A second question is to determine whether cross-validation holds in
the high-dimensional setting where $p>n$. However, our methods 
are not easily extensible to this setting. We rely heavily on
Assumption A which says that $n^{-1}\X^\top\X$ has a positive definite
limit as well as the related results of~\citep{FuKnight2000} which are
not available in high dimensions or with random design. 
Additionally, an interesting relaxation of our results would be to assume that
the matrices $C_n$ are all non-singular, but tend to a singular limit.  This would 
provide a more realistic scenario where regularization is more definitively useful.

Finally, one of the main benefits of lasso is its ability to induce
sparsity and hence perform variable selection. While 
selecting the correct model is far more relevant in high dimensions,
it may well be desirable in other settings as well. As mentioned in
the introduction, various authors have shown that cross-validation and
model selection are in some sense incompatible. In particular,  CV
is inconsistent for model selection. Secondly, using prediction
accuracy (which is what $\hat{R}_n(\lambda)$ is estimating) as the
method for choosing $\lambda$ fails to recover the sparsity pattern
even under orthogonal design. Thus, while we show that the
predictions of the model are asymptotically equivalent to those with
the optimal tuning parameter, we should not expect to have the
correct model even if $\theta$ were sparse. In particular,
$\hat{\theta}(\lambda)$ does not necessarily converge to the OLS estimator, and
may not converge to $\theta$. We do show (Lemma~\ref{lem:a1a}) that
$\hat{\theta}(\lambda)$ converges to its expectation uniformly for all $\lambda$. While this
expectation may be sparse, it may not be. But we are unable to show
that with cross-validated tuning parameter, the lasso will select the
\emph{correct} model. While this is not surprising in light of
previous research, neither is it
comforting. The question of whether lasso with
cross-validated tuning parameter can recover an unknown sparsity
pattern remains open.  Empirically, our experience is that
cross-validated tuning parameters lead 
to over-parameterized estimated models, but this has yet to be
validated theoretically.

\appendix

\section{Supplementary results}
\label{sec:suppl-results}

\begin{theorem}
  [Theorem 1 in \citep{HsuKakade2011}]
  Let $A\in \R^{m\times n}$, and define $\Sigma= A^\top A$. Suppose
  that $Z\in \R^n$ is a random vector such that there exists
  $\mu\in\R^n$ and $\sigma>0$ with
  \[
  \E\left[ \exp\left(\alpha^\top(Z-\mu)\right) \right] \leq \exp
  \left(\norm{\alpha}_2^2 \sigma^2 /2 \right)
  \]
  for all $\alpha \in \R^n$. Then, for all $t>0$,
  \[
  P\left( \norm{AZ}_2^2 > g_\sigma(t) + g_\mu (t) \right) \leq e^{-t},
  \]
  where
  \begin{align*}
    g_\sigma(t) &=\sigma^2 \left( \tr(\Sigma) + 2\sqrt{
        \tr(\Sigma^2)t } + 2\norm{\Sigma} t\right)\\
    \intertext{and}
    g_\mu (t) &= \norm{A\mu}^2 \left( 1 + 4\left(
        \frac{t \norm{\Sigma}_2^2 }{\tr(\Sigma^2)} \right)^{1/2} +
      \frac{4t \norm{\Sigma}_2^2 }{\tr(\Sigma^2)} \right)^{1/2},
  \end{align*}
  with $\norm{\Sigma}_2$ the operator norm of $\Sigma$.
\end{theorem}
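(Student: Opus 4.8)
The plan is to prove this sub-Gaussian quadratic-form tail bound by the exponential Markov (Chernoff) method, using a Gaussian-integration trick to linearize the quadratic form $\norm{AZ}_2^2$ in $Z$ so that the sub-Gaussian hypothesis can be applied directly. Observe first that $\norm{AZ}_2^2 = Z^\top \Sigma Z$, and that the nonzero eigenvalues of $\Sigma = A^\top A$ coincide with those of $AA^\top$; denote them $\lambda_1,\lambda_2,\ldots$, so that $\tr(\Sigma) = \sum_j \lambda_j$, $\tr(\Sigma^2) = \sum_j \lambda_j^2$, and $\norm{\Sigma}_2 = \max_j \lambda_j$. For any $\eta \in (0,\,(2\sigma^2\norm{\Sigma}_2)^{-1})$, applying Markov's inequality to $\exp(\eta\norm{AZ}_2^2)$ gives
\[
\P\left(\norm{AZ}_2^2 > u\right) \leq e^{-\eta u}\, \E\left[\exp\left(\eta \norm{AZ}_2^2\right)\right],
\]
so the whole problem reduces to bounding the moment generating function of $\norm{AZ}_2^2$ and then choosing $\eta$ and $u$.

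First I would linearize. Writing $\eta\norm{AZ}_2^2 = \tfrac12\norm{BZ}_2^2$ with $B = \sqrt{2\eta}\,A$, and using the Gaussian identity $\exp(\tfrac12\norm{v}_2^2) = \E_g[\exp(g^\top v)]$ for $g\sim N(0,I_m)$ independent of $Z$, I can interchange the two expectations to obtain
\[
\E\left[\exp\left(\eta\norm{AZ}_2^2\right)\right] = \E_g \E_Z\left[\exp\left((B^\top g)^\top Z\right)\right].
\]
Splitting off the mean and applying the hypothesis with $\alpha = B^\top g$ bounds the inner expectation by $\exp\big(\tfrac{\sigma^2}{2}\,g^\top BB^\top g + g^\top B\mu\big)$. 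The remaining outer integral over $g$ is a standard Gaussian integral $\E_g[\exp(\tfrac12 g^\top M g + b^\top g)]$ with $M = 2\eta\sigma^2 AA^\top$ and $b = \sqrt{2\eta}\,A\mu$, which evaluates in closed form to $\det(I-M)^{-1/2}\exp\big(\tfrac12 b^\top(I-M)^{-1}b\big)$ precisely when $I - M \succ 0$, the constraint on $\eta$ recorded above.

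Diagonalizing in the eigenbasis of $AA^\top$ turns the log-MGF into a scalar sum, yielding
\[
\log \E\left[\exp\left(\eta\norm{AZ}_2^2\right)\right] \leq -\tfrac12\sum_j \log(1 - 2\eta\sigma^2\lambda_j) + \eta\sum_j \frac{c_j^2}{1 - 2\eta\sigma^2\lambda_j},
\]
where $(c_j)$ are the coordinates of $A\mu$ in that basis, so $\sum_j c_j^2 = \norm{A\mu}_2^2$. It then remains to combine this with the Chernoff exponent $-\eta u$ and choose $\eta$ as a function of $t$ to reach the stated thresholds. The elementary inequality $-\log(1-x) \le x + \tfrac{x^2}{2(1-x)}$ on $[0,1)$, together with the crude bounds $\lambda_j \le \norm{\Sigma}_2$ and $(1-2\eta\sigma^2\lambda_j)^{-1} \le (1-2\eta\sigma^2\norm{\Sigma}_2)^{-1}$ applied inside the correction and mean terms, convert the sums over $j$ into expressions in $\tr(\Sigma)$, $\tr(\Sigma^2)$, $\norm{\Sigma}_2$, and $\norm{A\mu}_2^2$: the first-order term produces $\sigma^2\tr(\Sigma)$, the leading quadratic-in-$\eta$ fluctuation produces $2\sigma^2\sqrt{\tr(\Sigma^2)\,t}$ once balanced against $-\eta u$, the residual produces $2\sigma^2\norm{\Sigma}_2 t$, and these together reconstruct $g_\sigma(t)$; the mean-dependent sum reconstructs $g_\mu(t)$.

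The main obstacle is this final step: pinning down the admissible choice $\eta = \eta(t)$ and matching the resulting exponent term-by-term to the exact forms of $g_\sigma(t)$ and $g_\mu(t)$ while respecting the feasibility constraint $\eta < (2\sigma^2\norm{\Sigma}_2)^{-1}$. The somewhat unusual square-root structure of $g_\mu(t)$, with $\norm{\Sigma}_2^2/\tr(\Sigma^2)$ rather than a clean ratio, arises from bounding the mean term $\eta\sum_j c_j^2/(1-2\eta\sigma^2\lambda_j)$ uniformly over feasible $\eta$ and absorbing it, rather than from a transparent optimization; getting these constants exactly right is the delicate part, whereas the linearization and Gaussian-integral steps are essentially mechanical.
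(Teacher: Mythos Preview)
The paper does not actually prove this theorem: it is stated in the appendix as a supplementary result cited verbatim from \citep{HsuKakade2011}, with no accompanying argument. So there is no ``paper's own proof'' to compare against; the authors invoke it as a black box in the proof of Lemma~\ref{lem:subg-quad}.

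That said, your sketch is essentially the correct one, and it is in fact the approach of Hsu--Kakade--Zhang themselves: the Gaussian linearization $\exp(\tfrac12\norm{v}^2) = \E_g[\exp(g^\top v)]$ followed by Fubini to push the sub-Gaussian hypothesis inside, then an explicit Gaussian integral in $g$, and finally the scalar inequality $-\log(1-x) \le x + x^2/(2(1-x))$ together with a judicious choice of $\eta$. Your identification of the delicate step---matching the exact constants in $g_\mu(t)$, particularly the $\norm{\Sigma}_2^2/\tr(\Sigma^2)$ ratio inside the nested square root---is accurate; in the original, this comes from bounding the mean term uniformly via $(1-2\eta\sigma^2\lambda_j)^{-1} \le (1-2\eta\sigma^2\norm{\Sigma}_2)^{-1}$ and then tracking how the chosen $\eta(t)$ propagates, rather than from any separate optimization. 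There is no genuine gap in your plan; the remaining work is bookkeeping.
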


\begin{proof}[Almost everywhere convergence of $(biii)$]
  \begin{align*}
    \frac{\sigma^2C_X}{n}\left|\sum_{i=1}^n  W_i 
      \norm{\hat\theta^{(i)}(0)}_1 \right|  
    & \leq
    \frac{\sigma^2C_X}{n}\left|\sum_{i=1}^n  W_i
      (\norm{\hat\theta^{(i)}(0) - \theta}_1+
      \norm{\theta}_1) \right|  \\  
    & \leq
    \frac{\sigma^2C_X}{n}\left(\left|\sum_{i=1}^nW_i\norm{\hat\theta^{(i)}(0)
          - \theta}_1\right| \label{eq:ugly} 
      + \left|C_\theta \sum_{i=1}^n W_i \right|  \right)
  \end{align*}
  The second term goes to zero in probability by the strong law of large numbers. 
  For the first term, define $\Cin = (n-\norm{X_i}_2^2)^{-1}$,
  then
  \begin{align*}
    \hat \theta^{(i)}(0) 
    & = (\X_{(i)}^{\top}\X_{(i)})^{-1} \X_{(i)}^{\top} Y_{(i)} \\
    & = \ols - \Cin X_i Y_i + \Cin X_iX_i^\top \ols\\
    & = \frac{1}{n}(\Cin X_i X_i^\top +I)\X^\top \X \theta +
    \frac{1}{n} (\Cin X_i X_i^\top +I) \X^\top W\\
    &\quad- \Cin X_i X_i^\top
    \theta - \Cin X_i W_i\\
    & = \theta + \frac{1}{n}(\Cin X_i X_i^\top +I) \X^\top W - \Cin X_iW_i.\\
    \intertext{So,}
    \lefteqn{\norm{\hat\theta^{(i)}(0) - \theta}_1}\notag \\ 
    & = 
    \norm{ \frac{1}{n}(\Cin X_i X_i^\top +I) \X^\top W - \Cin X_iW_i }_1 \\
    & =
    \norm{ \frac{1}{n}\Cin  X_i X_i^\top \X_{(i)}^\top W_{(i)} + \frac{1}{n}\Cin X_i
      X_i^\top X_i W_i +\frac{1}{n} \X^\top W - \Cin X_iW_i }_1 \\
    & =
    \frac{1}{n} \norm{ \Cin X_i X_i^\top \X_{(i)}^\top W_{(i)} - X_iW_i
      +\X^\top W }_1 \\
    &=
    \frac{1}{n}\norm{ \Cin X_i X_i^\top\X_{(i)}^\top W_{(i)} + \X_{(i)}^\top W_{(i)} }_1.
    \intertext{Therefore,}
    \lefteqn{\frac{\sigma^2C_X}{n}\left|\sum_{i=1}^nW_i\norm{\hat\theta^{(i)}(0)
        - \theta}_1\right|}\notag\\
    &= \frac{\sigma^2C_X}{n^2}\left| \sum_{i=1}^n W_i \norm{ \Cin
          X_i X_i^\top\X_{(i)}^\top W_{(i)} + \X_{(i)}^\top W_{(i)}
        }_1\right|\\
    & =   \frac{\sigma^2C_X}{n^2} \left| \sum_{i=1}^n W_i \sum_{k=1}^p
      \left|\Cin \X_{ik}\sum_{\ell=1}^p \X_{i\ell} \sum_{j\neq
          i} \X_{j\ell} W_j + \sum_{j\neq i} \X_{jk} W_j  \right|\right|
    \\
    & =   \frac{\sigma^2C_X}{n^2} \left| \sum_{i=1}^n W_i \sum_{k=1}^p
      \left|\sum_{j\neq i} W_j \left(\Cin \X_{ik} \sum_{\ell=1}^p
          \X_{i\ell}\X_{j\ell} + \X_{jk}\right)  \right|\right| 
    \\
    & \leq   \frac{\sigma^2C^3_X}{n^2} \left| \sum_{i=1}^n W_i \sum_{k=1}^p
      \left|\sum_{j\neq i} W_j\Cin \X_{ik}\right|\right| +
    \frac{\sigma^2C_X}{n^2} \left| \sum_{i=1}^n W_i \sum_{k=1}^p
      \left|\sum_{j\neq i} W_j  \X_{jk}  \right|\right|  \\
    & \leq   \frac{\sigma^2C^4_XC_n^*}{n^2} \left| \sum_{i=1}^n W_i
      \left|\sum_{j\neq i} W_j\right|\right| +
    \frac{\sigma^2C^2_X}{n^2} \left| \sum_{i=1}^n W_i
      \left|\sum_{j\neq i} W_j  \right|\right|,
  \end{align*}
  where $C^*_n := (n-\max_i\norm{X_i}_2^2)^{-1} = \max_i \Cin$.
  To bound $\frac{1}{n^2}\left|\sum_{i=1}^nW_i|\sum_{j\neq i}  W_{j} |\right|$, observe
  \begin{align*}
    \frac{1}{n^2}\left|\sum_{i=1}^nW_i \left|\sum_{j=1}^n  W_{j}  - W_{i}\right|\right|
    & \leq
    \frac{1}{n^2}\left|\sum_{i=1}^nW_i \left( \left|\sum_{j=1}^n
          W_{j}\right|  + |W_{i}|\right)\right| \\ 
    & \leq
    \frac{1}{n^2}\left|\sum_{i=1}^nW_i |W_{i}| \right|+
    \frac{1}{n^2}\left|\sum_{i=1}^nW_i \left|\sum_{j=1}^n
        W_{j}\right|\right| \\ 
    & \leq
    \frac{1}{n^2}\left|\sum_{i=1}^nW_i |W_{i}| \right|+
    \frac{1}{n^2}\sum_{i=1}^n \left|W_i \right| \left|\sum_{j=1}^n
      W_{j}\right| \\ 
    & =
    \frac{1}{n}\left|\frac{1}{n}\sum_{i=1}^nW_i |W_{i}|
    \right|+\frac{1}{n}\sum_{i=1}^n  \left|W_i \right|
    \left|\frac{1}{n}\sum_{j=1}^n  W_{j}\right|
    \stackrel{ae}{\rightarrow} 0. 
  \end{align*}
\end{proof}

\bibliographystyle{mybibsty}
\bibliography{lassorefsECML}

\end{document}